\titleformat{\section}[block]{\large \bfseries}{\arabic{section}}{1em}{}[]
\titlespacing{\section}{0pt}{*1.5}{*1.1}
\titleformat{\subsection}[block]{\normalsize \bfseries}{\arabic{section}.\arabic{subsection}}{1em}{}[]
\titlespacing{\subsection}{0.5cm}{*4}{*1.5}
\numberwithin{figure}{section}
\numberwithin{table}{section}
\theoremstyle{plain}
\newtheorem{theorem}{Theorem}[section]
\newtheorem{lemma}{Lemma}[section]
\newtheorem{remark}{Remark}[section]
\numberwithin{equation}{section}
\newcommand*\linenomathpatchAMS[1]{%
  \expandafter\pretocmd\csname #1\endcsname {\linenomathAMS}{}{}%
  \expandafter\pretocmd\csname #1*\endcsname{\linenomathAMS}{}{}%
  \expandafter\apptocmd\csname end#1\endcsname {\endlinenomath}{}{}%
  \expandafter\apptocmd\csname end#1*\endcsname{\endlinenomath}{}{}%
}
  \let\linenomathAMS\linenomathWithnumbers
  \patchcmd\linenomathAMS{\advance\postdisplaypenalty\linenopenalty}{}{}{}
  \let\linenomathAMS\linenomathNonumbers
\begin{document}

%\title[A posteriori error estimators for Brinkman equations]{Constructing \textit{a posteriori} error estimator 
%based on auxiliary subspace of stabilized lowest order Crouzeix-Raviart element for Brinkman equations}
\title[Pressure-robustness in Stokes-Darcy Problem]{Pressure-robustness in Stokes-Darcy Optimal Control Problem with reconstruction operator}

\author{Jingshi Li}
\address{School of Mathematics and Statistics, Nanjing University of Information Science \& Technology, Nanjing 210044, China.}
\email{lijs@nuist.edu.cn}

\author{Jiachuan Zhang$^{\ast}$}
\thanks{*Corresponding author}
%\thanks{School of Physical and Mathematical Sciences, Nanjing Tech University, Nanjing 211816, China. (zhangjc@njtech.edu.cn)}
\address{School of Physical and Mathematical Sciences, Nanjing Tech University, Nanjing 211816, China.}
\email{zhangjc@njtech.edu.cn}

\author{Ran Zhang}
\address{School of Mathematics, Jilin University, Changchun 130012, Jilin, China.}
\email{zhangran@jlu.edu.cn}

\begin{abstract}

This paper presents a pressure-robust discretizations, specifically within the context of optimal control problems for the Stokes-Darcy system. The study meticulously revisits the formulation of the divergence constraint and the enforcement of normal continuity at interfaces, within the framework of the mixed finite element method (FEM). The methodology involves the strategic deployment of a reconstruction operator, which is adeptly applied to both the constraint equations and the cost functional. This is complemented by a judicious selection of finite element spaces that are tailored for approximation and reconstruction purposes. The synergy of these methodological choices leads to the realization of a discretization scheme that is pressure-robust, thereby enhancing the robustness and reliability of numerical simulations in computational mathematics.
\medskip

\noindent{\bf Keywords:}~~ pressure-robustness, PDE-constrained optimization , Stokes-Darcy problem, reconstruction operator, error estimate

\noindent{\bf \text{Mathematics Subject Classification} :}~~65N15, 65N30, 76D07.

\end{abstract}

\maketitle

%%%%%%%%%%%%%%%%%%%%%%%%%%%%% section 1 %%%%%%%%%%%%%%%%%%%%%%%%%%%%%%%%%%%%
\section{Introduction}\label{sec:introduction}
This paper is dedicated to the investigation of a pressure-robust FEM tailored for optimal control problems within the context of the coupled Stokes-Darcy system. This system is a cornerstone in various fields, including energy engineering, battery technology, biomedicine, and hydrogeology, among others.
Traditional discretization techniques often involve a relaxation of the divergence constraint and the interface normal continuity, opting instead for a discrete enforcement of these conditions. However, such relaxations can introduce pressure-dependent consistency errors that may adversely affect the accuracy of the computed velocity field.
Our analysis reveals that the velocity error can be polluted by these pressure-related consistency errors, which are quantified by the dual norm,
\begin{align*}
\sup_{\boldsymbol{\psi}_h\in V_h(0)}\frac{(\nabla\cdot\boldsymbol{\psi}_h,p)
-\langle\boldsymbol{\psi}_h^s\cdot\boldsymbol{n}^s
+\boldsymbol{\psi}_h^d\cdot\boldsymbol{n}^d,p^d\rangle_{\Gamma}}
{\|\boldsymbol{\psi}_h\|_X}.
\end{align*}
The discrete function space $V_h(0)$ is characterized by a collection of functions that, in a discrete context, adhere to the principles of divergence-free and interface normal continuity.
 If these properties are satisfied exactly, as denoted by the conditions $\nabla\cdot\boldsymbol{\psi}_h=0$ and $\boldsymbol{\psi}_h^s\cdot\boldsymbol{n}^s
+\boldsymbol{\psi}_h^d\cdot\boldsymbol{n}^d=0$,  the consistency error associated with the discretization is nullified. 
Should these conditions not be met precisely, the consistency error becomes contingent upon the pressure level and is challenging to mitigate, particularly in scenarios where the pressure dynamics are dominant.
Addressing the pressure dependency introduced by the discrete divergence-free constraint, the literature presents three distinct methodologies to remove the dependency: divergence-free FEM constructed by de Rham compelex (or Stokes complex) \cite{falkStokesComplexesConstruction2013,guzmanConformingDivergencefreeStokes2014},  modified $H(div)$-conforming FEM with techniques in discontinuous Galerkin methods \cite{cockburnNoteDiscontinuousGalerkin2007,wangNewFiniteElement2007}, and modified mixed FEM with divergence-free reconstruction operator for the test functions \cite{Linke2016,ledererDivergencefreeReconstructionOperators2017}. A review of the three approaches can be found in \cite{johnDivergenceConstraintMixed2017}. 

%In this paper, we use the reconstruction operator, denoted by $\Pi_h$, to eliminate pressure dependency. In order to achieve pressure robustness for optimal control problems, we not only need to apply the reconstruction operator to the test function on the right side of the constraint equation, but also need to modify the cost functional  as follows 

In this work, we engage the reconstruction operator, denoted by 
$\Pi_h$, as a pivotal tool to eradicate the dependency on pressure within our computational framework. To fortify the robustness of our optimal control problems against pressure variations, we implement a dual strategy: firstly, by applying the reconstruction operator $\Pi_h$ to the test function situated on the right-hand side of the constraint equation, and secondly, by recalibrating the cost functional to the following form:

\begin{align*}
\min \frac{1}{2}\|\boldsymbol{u}_h-\boldsymbol{u}^*\|^2+\frac{\alpha}{2}
\|\boldsymbol{\eta}_h\|^2\quad\longrightarrow\quad \min \frac{1}{2}\|\Pi_h\boldsymbol{u}_h-\boldsymbol{u}^*\|^2+\frac{\alpha}{2}
\|\boldsymbol{\eta}_h\|^2.
\end{align*}
%These two modifications have been used to achieve pressure robustness in Stokes optimal control problems \cite{merdonPressureRobustnessContextOptimal2023}. 
%Applying them on Stokes-Darcy optimal control problem, the pressure dependency will be change into
These methodological refinements, previously established for achieving pressure robustness in the context of Stokes optimal control problems \cite{merdonPressureRobustnessContextOptimal2023}, are now adapted to the Stokes-Darcy optimal control problem. The dependency on pressure is consequently transformed, as articulated in the modified consistency error expression:
\begin{align}\label{eqn:modified consistency error}
\sup_{\boldsymbol{\psi}_h\in V_h(0)}\frac{(\nabla\cdot(\Pi_h\boldsymbol{\psi}_h),p)
-\langle\Pi_h^s\boldsymbol{\psi}_h^s\cdot\boldsymbol{n}^s
+\Pi_h^d\boldsymbol{\psi}_h^d\cdot\boldsymbol{n}^d,p^d\rangle_{\Gamma}}
{\|\boldsymbol{\psi}_h\|_X}.
\end{align}

%If we choose compatible finite element spaces about the approximation spaces and the reconstruction spaces for fluid region and saturated porous medium, then the reconstruction operator will works for removing the pressure dependency related to both discrete divergence-free and discrete interface normal continuity, i.e.
By selecting compatible finite element spaces for both the approximation and reconstruction spaces within the fluid domain and the saturated porous medium, the reconstruction operator can effectively mitigate the pressure dependency associated with both the discrete divergence-free condition and the discrete interface normal continuity. This is articulated through the following implications:
\begin{align*}
(\nabla\cdot\boldsymbol{\psi}_h, \phi_h)=0,~\forall \phi_h\in Q_h &\Longrightarrow \nabla\cdot\Pi_h\boldsymbol{\psi}_h=0,\\
\langle\boldsymbol{\psi}_h^s\cdot\boldsymbol{n}^s
+\boldsymbol{\psi}_h^d\cdot\boldsymbol{n}^d,\lambda_h\rangle_{\Gamma}=0,~\forall
\lambda_h\in\Lambda_h 
&\Longrightarrow \Pi_h^s\boldsymbol{\psi}_h^s\cdot\boldsymbol{n}^s
+\Pi_h^d\boldsymbol{\psi}_h^d\cdot\boldsymbol{n}^d=0.
\end{align*}
%This means the consistency error shown in (\ref{eqn:modified consistency error}) vanishes (refer to Lemma~\ref{lem:Pi}). An example of such spaces is that: for the velocity, the approximation space and reconstruction space in the Darcy region, as well as the reconstruction space in the Stokes region, all be set Raviart-Thomas (RT) elements with the same order, and the approximation space in the Stokes region is the classical mixed finite element space. With the help of reconstruction operator and setting appropriate finite element spaces, we achieve pressure robustness for the Stokes-Darcy optimal control problem.
Consequently, the consistency error delineated in the modified consistency error expression evaporates, as substantiated by Lemma~\ref{lem:Pi}. A concrete instantiation of such spaces is delineated by the selection of Raviart-Thomas (RT) elements of identical order for the approximation and reconstruction spaces in the Darcy region, as well as for the reconstruction space in the Stokes region, with the approximation space in the Stokes region being the classical mixed finite element space. Through the synergistic application of the reconstruction operator and the judicious assignment of finite element spaces, we secure the pressure robustness for the Stokes-Darcy optimal control problem.

%The rest of the paper is structured as follows. Section~2 introduces the Stokes-Darcy problem, its variational formulation and optimal control problem. Sections~3 discusses classical FEM discretization and its the pressure dependency of velocity error. Section~4 modified the Stokes-Darcy optimal control problem with reconstruction operator and leads to a pressure-robust discretization. 
%Section~5 compares the classical discretization and pressure-robust discretization in two numerical examples to illustrate the theoretical findings.

The subsequent sections of this paper are meticulously organized as follows: Section 2 presents a comprehensive introduction to the Stokes-Darcy problem, delineating its variational formulation and the associated optimal control problem. Section 3 delves into the intricacies of the classical FEM discretization, scrutinizing its pressure dependency and its consequential impact on the velocity error. Section 4 introduces a novel modification to the Stokes-Darcy optimal control problem through the integration of a reconstruction operator, which paves the way for a pressure-robust discretization scheme. This structured approach ensures a coherent progression of topics, facilitating a clear understanding of the theoretical underpinnings, numerical challenges, and the proposed solutions within the realm of computational mathematics.

\section{Problem statements}
This paper concerned with pressure robustness finite element methods of the optimal control problem 
%\begin{align}\label{eqn:target function}
%\min \frac{1}{2}(\|\boldsymbol{u}^s-\boldsymbol{u}^s^*\|^2
%+\|\boldsymbol{u}^d-\boldsymbol{u}^d^*\|^2)
%+\frac{\alpha}{2}(\|\boldsymbol{\eta}_s\|^2+\|\boldsymbol{\eta}_d\|^2).
%\end{align}
%$\min J(\boldsymbol{u}^s,\boldsymbol{u}^d,\boldsymbol{\eta}_s,\boldsymbol{\eta}_d)$
%\begin{align}\label{eqn:optimal control problem 1}
%\mbox{minimize~}\quad J(\boldsymbol{u},\boldsymbol{w},\boldsymbol{y})=\frac{1}{2}
%\|\boldsymbol{u}-\boldsymbol{u}^d\|^2
%+\frac{1}{2}\|\boldsymbol{w}-\boldsymbol{w}_d\|^2
%+\frac{\alpha}{2}\|\boldsymbol{z}\|^2
%\end{align}
subject to the Stokes-Darcy problem 
\begin{align}\label{eqn:state Stokes}
&-2\mu\nabla\cdot D(\boldsymbol{u}^s)+\nabla p^s=\boldsymbol{f}^s \mbox{~in~} \Omega^s,\quad
\nabla\cdot \boldsymbol{u}^s=0 \mbox{~in~} \Omega^s,\quad
\boldsymbol{u}^s=0 \mbox{~on~} \Gamma^s,\\
\label{eqn:state Darcy}
&\mu K^{-1}\boldsymbol{u}^d+\nabla p^d=0 \mbox{~in~} \Omega^d,\quad
\nabla\cdot \boldsymbol{u}^d=g^d \mbox{~in~} \Omega^d,\quad
\boldsymbol{u}^d\cdot \boldsymbol{n}^d=0 \mbox{~on~} \Gamma^d.
\end{align}
with the following interface conditions on $\Gamma$
\begin{align}\label{eqn:state boundary 1}
&\boldsymbol{u}^s\cdot\boldsymbol{n}^s+\boldsymbol{u}^d\cdot\boldsymbol{n}^d=0,\\
\label{eqn:state boundary 2}
&p^s-2\mu D(\boldsymbol{u}^s)\boldsymbol{n}^s\cdot\boldsymbol{n}^s=p^d,\\
\label{eqn:state boundary 3}
&\boldsymbol{u}^s\cdot\boldsymbol{\tau}_j=-2\frac{\sqrt{\kappa_j}}{\alpha_1}D(\boldsymbol{u}^s)
\boldsymbol{n}^s\cdot\boldsymbol{\tau}_j, \quad j=1,\cdots,N-1.
\end{align}
%as follows
%\begin{align*}
%\min \frac{1}{2}(\|\boldsymbol{u}^s-\boldsymbol{u}^s^*\|^2
%+\|\boldsymbol{u}^d-\boldsymbol{u}^d^*\|^2)
%+\frac{\alpha}{2}(\|\boldsymbol{\eta}_s\|^2+\|\boldsymbol{\eta}_d\|^2).
%\end{align*}

In the above equations, $\Omega^i, (i=s,d)$ are the bounded and simply connected domain in $\mathbb{R}^N, (N=2, 3)$, which are free fluid region and the Darcy flow, respectively. Let the interface $\Gamma=\Omega^s\cap \Omega^d$, and $\Gamma^i=\partial\Omega^i\backslash\Gamma$. All interfaces and boundaries are hypothesized to be the polygonal or polyhedral in $\mathbb{R}^N$. Also let $\boldsymbol{n}_i$, be the unit normal vectors outward to $\partial\Omega^i$. Specifically, $\boldsymbol{n}^s=-\boldsymbol{n}^d$ on $\Gamma$. The fluid velocity $\boldsymbol{u}^i$ and fluid pressure $p^i$ play different mathematical roles in different regions. In the free fluid domain $\Omega^s$, 
$(\boldsymbol{u}^s,p^s)$ satisfies the Stokes equations. And in the porous media domain
$\Omega^d$, $(\boldsymbol{u}^d,p^d)$ satisfies the Darcy flow equations.
Here, $\boldsymbol{f}^s$ and $g^d$ describe the body forces and divergence constraint with solvability condition $\int_{\Omega^d}g^d=0$,
$\mu$ is the fluid viscosity, 
$D(\boldsymbol{u}^s)=(\nabla\boldsymbol{u}^s+(\nabla\boldsymbol{u}^s)^T)/2$,
$\boldsymbol{\tau}_j$ denote the unit
tangent vectors to $\Gamma$, 
$\alpha_1$ is an empirical parameter obtained through experiments,
$K_j$ is the 
symmetric and positive definite permeability tensor in $\Omega_2$, which satisfying 
\begin{align*}
K_L\xi^T\xi\leq \xi^TK_j\xi\leq K_U\xi^T\xi, \quad \forall \xi\in\mathbb{R}^N,
\end{align*} 
for some constants $0< K_L\leq K_U<\infty$, and
$\kappa_j=\boldsymbol{\tau}_j\cdot K_j\cdot\boldsymbol{\tau}_j$. 
 Moreover, (\ref{eqn:state boundary 1}) represents normal continuity across $\Gamma$, balance of normal forces is expressed by (\ref{eqn:state boundary 2}), and (\ref{eqn:state boundary 3}) is the famous Beavers–Joseph–Saffman conditions. 
\subsection{Weak formulation}
Before deriving the weak formulation, let us first introduce some notation for the Sobolev space. For a subdomain $E\subset\mathbb{R}^N$ and $m\geq 0$, we denote by  $\|\cdot\|_{m,E}$ and  $|\cdot|_{m,E}$ the standard norm and seminorm of the usual Sobolev space $H^m(E)$. In particular, $H^0(E)=L^2(E)$ and the subscript $m$ will be dropped from the norms. The norms are applicable to both vector valued functions in space $[H^m(E)]^N$ and tensor valued functions in space $[H^m(E)]^{N\times N}$ as well.
%The space $H_0^m(E)$ denotes the closure of $C_0^{\infty}$ in $H^m(E)$. Let the vector-valued function space $[H^m(E)]^N$ be defined as follows
%\begin{align*}
%[H_m(E)]^N=\{\boldsymbol{v}=(v_1,\cdots,v_N)~|~v_i\in H^m(E), i=1,\cdots,N\},
%\end{align*}
%with norms $\|\boldsymbol{v}\|_{m,E}=\left(\sum_{i=1}^N\|v_i\|_{m,E}^2\right)^{1/2}$
%and $|\boldsymbol{v}|_{m,E}=\left(\sum_{i=1}^N|v_i|_{m,E}^2\right)^{1/2}$. 
Let $(\cdot,\cdot)_E$ denote the $L^2(E),[L^2(E)]^N$, and $[L^2(E)]^{N\times N}$ inner product for scalar, vector, and tensor valued functions, respectively. For example, for any vector valued functions $\boldsymbol{\psi},\boldsymbol{\phi}\in [L^2(E)]^N$, tensor valued functions $A,B\in [L^2(E)]^{N\times N}$, we have
\begin{align*}
(\boldsymbol{\psi},\boldsymbol{\phi})_E=\sum_{i=1}^N\int_E\psi_i\phi_i,
\quad (A,B)_E=\sum_{i,j=1}^N\int_EA_{ij}B_{ij}=\int_EA:B.
\end{align*}
In the above definition, if $E=\Omega^i, i=s,d$, we will abbreviate as $\|\cdot\|_{k,i}$,
$|\cdot|_{k,i}$, and $(\cdot,\cdot)_i$. We also use $\langle\cdot,\cdot\rangle_{\Gamma}$ and $\|\cdot\|_{\Gamma}$ to denote the $L^2(\Gamma)$ inner product and norm, respectively, for scalar and vector valued functions.
Another scale-valued and another vector-valued function spaces also need to defined as follows:
\begin{align*}
L_0^2(E)&=\{q\in L^2(E)~|~\int_E q=0\},\\
H(div;E)&=\{\boldsymbol{v}\in [L^2(E)]^N~|~\nabla\cdot\boldsymbol{v}\in L^2(E)\}.
\end{align*}

Now, we are ready to define the function spaces for both Stokes region and Darcy region. Let
\begin{align*}
V^s=\{\boldsymbol{v}^s\in [H^1(\Omega^s)]^N~|~\boldsymbol{v}^s=0 \mbox{~on~} \Gamma^s\},
\end{align*}
and
\begin{align*}
V^d=\{\boldsymbol{v}^d\in H(div;\Omega^d)~|~\boldsymbol{v}^d\cdot\boldsymbol{n}^d=0 \mbox{~on~} \Gamma^d\}.
\end{align*}
with the norm
%\begin{align*}
%\|\boldsymbol{v}^s\|_{V^s}=|\boldsymbol{v}^s|_{1,s}\quad\mbox{~and~}\quad
%\|\boldsymbol{v}^d\|_{V^d}=(\|\boldsymbol{v}^d\|_d^2
%+\|\nabla\cdot\boldsymbol{v}^d\|_d^2)^{1/2}.
%\end{align*}
\begin{align*}
\|\boldsymbol{v}\|_X=(|\boldsymbol{v}^s|_{1,s}^2+\|\boldsymbol{v}^d\|_d^2
+\|\nabla\cdot\boldsymbol{v}^d\|_d^2)^{1/2},
\end{align*}
for any $\boldsymbol{v}=(\boldsymbol{v}^s,\boldsymbol{v}^d)\in V^s\times V^d$.
Based on the two spaces, the space for the velocity can be defined as 
\begin{align*}
V=\{\boldsymbol{v}=(\boldsymbol{v}^s,\boldsymbol{v}^d)\in V^s\times V^d~|~
\langle\boldsymbol{v}^s\cdot\boldsymbol{n}^s+\boldsymbol{v}^d\cdot\boldsymbol{n}^d,\lambda\rangle_{\Gamma}=0, \forall\lambda\in\Lambda\}.
\end{align*}
where $\Lambda=H_{00}^{1/2}(\Gamma)\subset L^2(\Gamma)$ is the completion of the smooth functions with compact support in $\Gamma$ with respect to a well-defined norm (refer to \cite{Layton2002}).
%equipped with the norm
%\begin{align*}
%\|\boldsymbol{v}\|_{V}=(\|\boldsymbol{v}^s\|_{V^s}^2+\|\boldsymbol{v}^d\|_{V^d}^2)^{1/2}.
%\end{align*}
And we define the pressure space $Q=L_0^2(\Omega)$ for $\Omega=\Omega^s\cup\Omega^d$.
%\begin{align*}
%Q=\{q\in L^2(\Omega)~|~\int_{\Omega}q=0\}.
%\end{align*}
%with the norm
%\begin{align*}
%\|q\|_Q=(\|q_s\|_s^2+\|q_d\|_d^2)^{1/2}.
%\end{align*}
%Then, with $\boldsymbol{f}_{|\Omega^s}=\boldsymbol{f}^s$ and $\boldsymbol{f}_{|\Omega^d}=\boldsymbol{f}^d$, 
Then, with $\boldsymbol{f}=(\boldsymbol{f}^s,0)\in [L^2(\Omega^s)]^N\times [L^2(\Omega^d)]^N$ and 
$g=(0,g^d)\in L^2(\Omega^s)\times L^2(\Omega^d)$, 
we have the weak formulation of (\ref{eqn:state Stokes})-(\ref{eqn:state boundary 3}): find $(\boldsymbol{u},p)\in V\times Q$ such that
\begin{align}\label{eqn:state weak 1}
a(\boldsymbol{u},\boldsymbol{v})+b(\boldsymbol{v},p)&=(\boldsymbol{f},
\boldsymbol{v}),
\quad \forall\boldsymbol{v}\in V,\\
\label{eqn:state weak 2}
b(\boldsymbol{u},q)&=(g,q), \quad \forall q\in Q,
\end{align}
where
\begin{align*}
&a(\boldsymbol{u},\boldsymbol{v})\\
=&a_s(\boldsymbol{u}^s,\boldsymbol{v}^s)
+a_d(\boldsymbol{u}^d,\boldsymbol{v}^d)
+a_I(\boldsymbol{u}^s,\boldsymbol{v}^s)\\
=&2\mu(D(\boldsymbol{u}^s),D(\boldsymbol{v}^s))_s+\mu(K^{-1}\boldsymbol{u}^d,\boldsymbol{v}^d)_d
+
\sum_{j=1}^{N-1}\frac{\alpha_1\mu}{\sqrt{\kappa_j}}\langle\boldsymbol{u}^s\cdot\boldsymbol{\tau}_j,
\boldsymbol{v}^s\cdot\boldsymbol{\tau}_j\rangle_{\Gamma},\quad \forall \boldsymbol{u},\boldsymbol{v}\in V^s\times V^d,\\
&b(\boldsymbol{v},q)=-(\nabla\cdot\boldsymbol{v}^s,q)_s-(\nabla\cdot\boldsymbol{v}^d,q)_d,
\quad \forall \boldsymbol{v}\in V^s\times V^d, q\in Q.
\end{align*}
Note that the condition (\ref{eqn:state boundary 1}) are included in the spatial construction of $V$, and (\ref{eqn:state boundary 2}) and (\ref{eqn:state boundary 3}) are implicit in the derivation of the formulation (\ref{eqn:state weak 1}). The solvability of problem (\ref{eqn:state weak 1}) and (\ref{eqn:state weak 2})
can be found in Theorem~3.1 in \cite{Layton2002}.

\subsection{Optimal control problem}
Consider the optimal control problem: for given data $\boldsymbol{f}\in W=[L^2(\Omega)]^N$,
%\{\boldsymbol{v}\in [L^2(\Omega)]^N~|~\boldsymbol{v}=0 \mbox{~in~}\Omega^d\}$}, $\boldsymbol{u}^*\in [L^2(\Omega)]^N$, 
seek state and control
$(\boldsymbol{u},\boldsymbol{\eta})\in V(g)\times W$, with $V(g)=\{\boldsymbol{v}\in V~|~\nabla\cdot\boldsymbol{v}^s=0,~\nabla\cdot\boldsymbol{v}^d=g^d\}$, solving
\begin{align}
\label{eqn:optimal control problem P}
\begin{aligned}
\min_{(\boldsymbol{u},\boldsymbol{\eta})\in V(g)\times W}\frac{1}{2}\|\boldsymbol{u}-\boldsymbol{u}^*\|^2
+\frac{\alpha}{2}\|\boldsymbol{\eta}\|^2,&\\
 \mbox{s.t.~} a(\boldsymbol{u},\boldsymbol{v})=(\boldsymbol{f}+\boldsymbol{\eta},\boldsymbol{v}),&
\end{aligned}
\end{align}
for any $\boldsymbol{v}\in V(g)$. Here, $\alpha$ is a positive constant, $\boldsymbol{u}^*$ are the target vectors, and $\boldsymbol{\eta}$ is the control vectors. 
%Note that this problem is equivalent to problem (\ref{eqn:target function})$\sim$(\ref{eqn:state boundary 3}) by introducing a pressure $p\in Q$ to allow working with test functions from $V$ rather than $V(g)$.
Since this is a linear quadratic optimization problem, standard theory, e.g., \cite{Reyes2015}, gives the necessary and sufficient optimality conditions with an adjoint state $\boldsymbol{w}\in V(g)$ satisfying:
\begin{align*}
a(\boldsymbol{u},\boldsymbol{\psi})-(\boldsymbol{f}+\boldsymbol{\eta},\boldsymbol{\psi})&=0,
\quad \forall \boldsymbol{\psi}\in V(g),\\
a(\boldsymbol{\psi},\boldsymbol{w})-(\boldsymbol{u}-\boldsymbol{u}^*,\boldsymbol{\psi})&=0,
\quad \forall \boldsymbol{\psi}\in V(g),\\
(\alpha\boldsymbol{\eta}+\boldsymbol{w},\boldsymbol{\psi})&=0,
\quad \forall \boldsymbol{\psi}\in W.
\end{align*}
The third equation yields an algebraic raltion
\begin{align*}
\boldsymbol{\eta}=\frac{-1}{\alpha}\boldsymbol{w}
\end{align*}
between the adjoint $\boldsymbol{w}$ and the control which could be used to eliminate the 
control variable from the problem, by the so called variational discretization approach
\cite{Hinze2005}. However, \cite{Gaspoz2020} suggests that rather than taking this simple substitution a more convenient choice for the following analysis is the consideration of the rescaled adjoint 
\begin{align*}
\boldsymbol{z}=\frac{1}{\sqrt{\alpha}}\boldsymbol{w},\quad
\mbox{~and hence~}\boldsymbol{\eta}=\frac{-1}{\sqrt{\alpha}}\boldsymbol{z}.
\end{align*}
From this it is easy to see that an optimal solution $(\boldsymbol{u},\boldsymbol{\eta})
\in V(g)\times W$ of (\ref{eqn:optimal control problem P}) is equivalently given by a solution $(\boldsymbol{u},\boldsymbol{z})\in V(g)\times V(g)$ of 
\begin{align*}
a(\boldsymbol{u},\boldsymbol{\psi})
+\alpha^{-1/2}(\boldsymbol{z},\boldsymbol{\psi})
&=(\boldsymbol{f},\boldsymbol{\psi}),
\quad \forall \boldsymbol{\psi}\in V(g),\\
a(\boldsymbol{\psi},\boldsymbol{z})
-\alpha^{-1/2}(\boldsymbol{u},\boldsymbol{\psi})
&=-\alpha^{-1/2}(\boldsymbol{u}^*,\boldsymbol{\psi}),
\quad \forall \boldsymbol{\psi}\in V(g).
\end{align*}
Adding the pressure for the divergence constraints, it is also equivalent to seek 
$(\boldsymbol{u},\boldsymbol{z},p,r)\in V\times V\times Q\times Q$
\begin{align}
\label{eqn:PDE problem}
\begin{aligned}
a(\boldsymbol{u},\boldsymbol{\psi})+b(\boldsymbol{\psi},p)
+\alpha^{-1/2}(\boldsymbol{z},\boldsymbol{\psi})
&=(\boldsymbol{f},\boldsymbol{\psi}),
 &&\forall \boldsymbol{\psi}\in V,\\
b(\boldsymbol{u},\phi)&=(g,\phi), &&\forall \phi\in Q,\\
a(\boldsymbol{\psi},\boldsymbol{z})
+b(\boldsymbol{\psi},r)
-\alpha^{-1/2}(\boldsymbol{u},\boldsymbol{\psi})
&=-\alpha^{-1/2}(\boldsymbol{u}^*,\boldsymbol{\psi}),
 &&\forall \boldsymbol{\psi}\in V,\\
b(\boldsymbol{z},\phi)&=(g,\phi), &&\forall \phi\in Q.
\end{aligned} 
\end{align}

\section{Classical FEM discretization}

%If the space $V_h$ is the nonconforming finite element, there exists a subspace space, denoted by $M_h$, such that $V_h\subset V\oplus M_h$. Then, we have a still a unique decomposition $\boldsymbol{v}=\boldsymbol{v}_c+\boldsymbol{v}_M$ with $\boldsymbol{v}_c\in V, \boldsymbol{v}_M\in M_h$ for any $\boldsymbol{v}\in V_h\oplus M_h$. Afterwards we can define
%\begin{align}
%a_h(\boldsymbol{v},\boldsymbol{\psi})=a(\boldsymbol{v}_c,\boldsymbol{\psi}_M)
%+a_M(\boldsymbol{v}_c,\boldsymbol{\psi}_M),\quad \forall \boldsymbol{v},\boldsymbol{\psi}
%\in V\oplus M_h.
%\end{align}
%where $a_M(\cdot,\cdot)$ is a well-defined bilinear formulation. If $V_h\subset V$, we have $a_h(\cdot,\cdot)=a(\cdot,\cdot)$.

Let $\mathcal{T}_h$ be a conforming and shape-regular triangulation, which consists of simplices and matches at $\Gamma$ \cite{Layton2002}. The set of edges or faces in $\mathcal{T}_h$ is denoted by $\mathcal{E}_h$. Moreover, we define three useful sub-triangulation as following
\begin{align*}
\mathcal{E}_h(\Gamma)=\{e\in \mathcal{E}_h~|~ e\subset\Gamma\},
\quad \mathcal{T}_h(\Omega^i)=\{T\in \mathcal{T}_h~|~ T\subset\Omega^i\}, \quad i=s,d.
\end{align*}
For $T\in\mathcal{T}_h$, let $h_T$ denote the diameter of the polygon or polyhedra $T$ and $h=\max_{T\in\mathcal{T}_h} h_T$. For the discretization of the fluid's variables in $\Omega^s$, we choose finite element space $V_h^s\subset V^s$, $Q_h^s\subset L_0^2(\Omega^s)$ introduced in Section 4.1.1 in \cite{Linke2016}, which are LBB-stable
%\begin{align*}
%\inf_{0\neq q\in M_h^s}\sup_{0\neq\boldsymbol{v}\in V_h^s}\frac{(\nabla\cdot\boldsymbol{v})}{\|\nabla\boldsymbol{v}\|\|q\|}\geq\beta_s>0,
%\end{align*}
and satisfy a discrete Korn inequality.
%\begin{align*}
%(D(\boldsymbol{v}),D(\boldsymbol{v}))_s\geq\alpha_s\|\nabla\boldsymbol{v}\|, \quad\forall \boldsymbol{v}\in V_h^s.
%\end{align*}
This velocity space is achieved by enriching the space $P_k,k\geq 2$, of continuous, piece-wise polynomial functions of degree less than or equal to $k$ with suitable bubble functions. 
And the pressure space contains the functions with degree $k-1$ and vanishing integral in $\Omega^s$.

For the porous medium problem in $\Omega^d$, 
we choose mixed finite element spaces to be the $RT_{k-1}$ spaces \cite{Raviart1977} for the normal velocity vanishing on boundary $\Gamma^d$, and the pressure space similar to fluid domain, denoted by $V_h^d$  and $Q_h^d$. It is known for these choices that $V_h^d\subset V^d, Q_h^d\in L_0^2(\Omega^d)$, and
$\nabla\cdot V_h^d=Q_h^d$.

On the interface $\Gamma$, define another finite element space, which is the normal trace of $V_h^d$ on $\Gamma_h$, as following
\begin{align*}
\Lambda_h=\{\lambda_h\in L^2(\Gamma)~|~\mu_{h|e} \in P_{k-1}(e), \forall e\in\mathcal{E}_h(\Gamma)\}.
\end{align*}

With these spaces $V_h^s, V_h^d$, and $\Lambda_h$, define
\begin{align}\label{eqn:definition of Vh}
V_h=\{\boldsymbol{v}_h=(\boldsymbol{v}_h^s,\boldsymbol{v}_h^d)\in V_h^s\times V_h^d~|~
\langle\boldsymbol{v}_h^s\cdot\boldsymbol{n}^s
+\boldsymbol{v}_h^d\cdot\boldsymbol{n}^d,\lambda_h\rangle_{\Gamma}=0,\forall \lambda_h\in\Lambda_h\}.
\end{align}
%where $b_{\Gamma}(\boldsymbol{v}_h,\phi)$ is defined by 
%\begin{align*}
%b_{\Gamma}(\boldsymbol{v}_h,\phi)=\langle\boldsymbol{v}_h^s\cdot\boldsymbol{n}^s
%+\boldsymbol{v}_h^d\cdot\boldsymbol{n}^d,\phi\rangle_{\Gamma}
%\end{align*}
%for any $\boldsymbol{v}_h=(\boldsymbol{v}_h^s,\boldsymbol{v}_h^d)\in V_h^s\times V_h^d$ and $\phi\in L^2(\Gamma)$. 
%Here $H_{00}^{1/2}(\Gamma)\subset L^2(\Gamma)$ is the completion of the smooth functions with compact support in $\Gamma$ with respect to a well-defined norm (refer to \cite{Layton2002} \textcolor{red}{and [19]}).
%\begin{align*}
%\|\phi\|_{1/2,\partial\Omega}
%\end{align*}
Note that, sice function $\lambda_h\in\Lambda_h$ does not in general vanish on $\partial\Gamma$ ($\Lambda_h\not\subset\Lambda$), the space $V_h$ is nonconforming ($V_h\not\subset V$).
With spaces $Q_h^s$ and $Q_h^d$, define $Q_h=Q_h^s\times Q_h^d$, which is vanishing integral in $\Omega$ and satisfy $Q_h\subset Q$.
For the discretization with respect to the space $V(g)$, define
\begin{align*}
V_h(g)=\{\boldsymbol{\psi}\in V_h~|~b(\boldsymbol{\psi}_h,\phi_h)=(g,\phi_h),~\forall \phi_h\in Q_h\}
\end{align*}
for the inf-sup stable pair $V_h\times Q_h$ with $V_h(g)\not\subset V(g)$ 
(since the divergence-free and normal continuity across $\Gamma$ hold discretely).
%$\nabla\cdot\boldsymbol{\psi}_h\neq0$ and $(\boldsymbol{\psi}_h^s\cdot\boldsymbol{n}^s
%+\boldsymbol{\psi}_h^d\cdot\boldsymbol{n}^d)_{|\Gamma}\neq0$)  

The analysis  is based on a functional on $V_h(0)$, which consists of relaxed divergence-free constraint and relaxed normal continuity across $\Gamma$. This functional is defined by
\begin{align}
\label{eqn:consistency error functional}
\vartheta_{\phi}(\boldsymbol{\psi}_h)=b(\boldsymbol{\psi}_h,\phi)
-\langle\boldsymbol{\psi}_h^s\cdot\boldsymbol{n}^s
+\boldsymbol{\psi}_h^d\cdot\boldsymbol{n}^d,\phi^d\rangle_{\Gamma}
, \quad \forall \boldsymbol{\psi}_h\in V_h(0),
\end{align}
related to $\phi=(\phi^s,\phi^d)\in Q$ and $\phi^d\in H^1(\Omega^d)$. Then the consistency error (the measure of $V_h$ being not a subset of $V$)
in form of the dual form is defined by
\begin{align}\label{eqn:consistency error subset}
\|\vartheta_{\phi}\|_{(V_h(0))^*}=\sup_{\boldsymbol{\psi}_h\in V_h(0)}
\frac{\vartheta_{\phi}(\boldsymbol{\psi}_h)}
{\|\boldsymbol{\psi}_h\|_X}.
\end{align}
%where the norm $\|\cdot\|_X$ is defined by $\|\boldsymbol{v}\|_X^2=a(\boldsymbol{v},\boldsymbol{v})$ for $\boldsymbol{v}\in V+V_h$.
For the function exactly satisfying divergence constraint and interface normal continuity,  i.e. $\boldsymbol{\psi}_h\in V_h(0)\cap V(0)$, this dual norm is always zero. 
%This dual norm can 
%be bounded by the pressure approximation error \textcolor{red}{$\|\phi_s-\pi_h\phi_s\|_s+h^{-1}\|\phi_d-\pi_h\phi_d\|_d$. need to prove}.
%\begin{lemma}
%The norm $\|\boldsymbol{v}\|_X^2=a(\boldsymbol{v},\boldsymbol{v})$ for $\boldsymbol{v}\in V+V_h$ is well-defined.
%\end{lemma}
%\begin{proof}
%\textcolor{red}{need to prove}.
%\end{proof}
\begin{remark}
In this work, we use $\mathbbm{a}\lesssim \mathbbm{b}$ when there exists a constant $c$ independent of $\mathbbm{a},\mathbbm{b},h$ such that $\mathbbm{a}\leq c\mathbbm{b}$.
\end{remark}

\begin{lemma}
\label{lem:consistency error}
For any $\phi=(\phi^s,\phi^d)\in Q$ with $\phi^d\in H^1(\Omega^d)$, the induced consistency error defined in (\ref{eqn:consistency error functional}),
is well-defined, and with the estimation
\begin{align*}
\|\vartheta_{\phi}\|_{(V_h(0))^*}^2
\lesssim &
\inf_{\phi_h\in Q_h^s}\|\phi^s-\phi_h\|_s^2
+h\inf_{\phi_h\in\Lambda_h}\|\phi^d-\phi_h\|_{\Gamma}^2.
\end{align*}
\end{lemma}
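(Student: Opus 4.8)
The plan is to bound the numerator $\vartheta_\phi(\boldsymbol{\psi}_h)$ for an arbitrary fixed $\boldsymbol{\psi}_h\in V_h(0)$, splitting it into a Stokes interior divergence contribution and an interface jump contribution, each of which I will estimate by the product of $\|\boldsymbol{\psi}_h\|_X$ with an approximation residual. Well-definedness is settled first: since $\phi^d\in H^1(\Omega^d)$ admits an $L^2(\Gamma)$ trace and the normal components $\boldsymbol{\psi}_h^s\cdot\boldsymbol{n}^s,\boldsymbol{\psi}_h^d\cdot\boldsymbol{n}^d$ of finite element functions lie in $L^2(\Gamma)$, both the interface pairing and the volume products $(\nabla\cdot\boldsymbol{\psi}_h^i,\phi^i)_i$ are finite.

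For the divergence part $b(\boldsymbol{\psi}_h,\phi)=-(\nabla\cdot\boldsymbol{\psi}_h^s,\phi^s)_s-(\nabla\cdot\boldsymbol{\psi}_h^d,\phi^d)_d$, I would exploit the exactness $\nabla\cdot V_h^d=Q_h^d$: the Darcy divergence $\nabla\cdot\boldsymbol{\psi}_h^d$ belongs to $Q_h^d$, hence replacing $\phi^d$ by its $L^2(\Omega^d)$-projection $P^d\phi^d\in Q_h^d$ leaves $(\nabla\cdot\boldsymbol{\psi}_h^d,\phi^d)_d$ unchanged. For any $\phi_h^s\in Q_h^s$ the pair $(\phi_h^s,P^d\phi^d)$ agrees, up to a single global additive constant, with an element of $Q_h$; the discrete divergence constraint $b(\boldsymbol{\psi}_h,\cdot)=0$ on $Q_h$ then annihilates it, while the residual constant contributes a multiple of $\langle\boldsymbol{\psi}_h^s\cdot\boldsymbol{n}^s+\boldsymbol{\psi}_h^d\cdot\boldsymbol{n}^d,1\rangle_\Gamma$, which vanishes by the discrete normal continuity tested against the constant in $\Lambda_h$. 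The divergence part therefore collapses to $-(\nabla\cdot\boldsymbol{\psi}_h^s,\phi^s-\phi_h^s)_s$, which is bounded by $\|\nabla\cdot\boldsymbol{\psi}_h^s\|_s\|\phi^s-\phi_h^s\|_s\lesssim\|\boldsymbol{\psi}_h\|_X\|\phi^s-\phi_h^s\|_s$ and, after optimizing over $\phi_h^s$, gives the first term of the claim.

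The interface term is the crux. Writing $\Pi_\Lambda$ for the $L^2(\Gamma)$-orthogonal projection onto $\Lambda_h$ (piecewise $P_{k-1}$), the relaxed normal continuity means the velocity jump satisfies $\Pi_\Lambda(\boldsymbol{\psi}_h^s\cdot\boldsymbol{n}^s+\boldsymbol{\psi}_h^d\cdot\boldsymbol{n}^d)=0$; since $\boldsymbol{\psi}_h^d\cdot\boldsymbol{n}^d\in\Lambda_h$, the jump reduces to $(I-\Pi_\Lambda)(\boldsymbol{\psi}_h^s\cdot\boldsymbol{n}^s)$. The same orthogonality lets me subtract any $\lambda_h\in\Lambda_h$ from $\phi^d$, so the interface term equals $\langle(I-\Pi_\Lambda)(\boldsymbol{\psi}_h^s\cdot\boldsymbol{n}^s),\phi^d-\lambda_h\rangle_\Gamma$. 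The decisive quantitative estimate is the elementwise bound $\|(I-\Pi_\Lambda)(\boldsymbol{\psi}_h^s\cdot\boldsymbol{n}^s)\|_e\le\inf_{c}\|\boldsymbol{\psi}_h^s\cdot\boldsymbol{n}^s-c\|_e\lesssim h^{1/2}|\boldsymbol{\psi}_h^s|_{1,T}$, where the first inequality uses that $\Pi_\Lambda$ fixes constants and is a contraction, and the second follows from a Poincar\'e/trace inequality on the reference element scaled to an element $T$ of diameter $h$ with face $e\subset\Gamma$. Summing over $e\in\mathcal{E}_h(\Gamma)$ yields $\|(I-\Pi_\Lambda)(\boldsymbol{\psi}_h^s\cdot\boldsymbol{n}^s)\|_\Gamma\lesssim h^{1/2}|\boldsymbol{\psi}_h^s|_{1,s}\le h^{1/2}\|\boldsymbol{\psi}_h\|_X$, so Cauchy--Schwarz bounds the interface term by $h^{1/2}\|\boldsymbol{\psi}_h\|_X\|\phi^d-\lambda_h\|_\Gamma$.

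Finally I would add the two bounds, divide by $\|\boldsymbol{\psi}_h\|_X$, pass to the supremum over $V_h(0)$, square using $(a+b)^2\le2a^2+2b^2$, and optimize independently over $\phi_h^s\in Q_h^s$ and $\lambda_h\in\Lambda_h$; this reproduces $\inf_{\phi_h\in Q_h^s}\|\phi^s-\phi_h\|_s^2+h\inf_{\phi_h\in\Lambda_h}\|\phi^d-\phi_h\|_\Gamma^2$. I expect the interface estimate to be the main obstacle: one must recognize that the relaxed normal continuity forces the jump to equal the part of $\boldsymbol{\psi}_h^s\cdot\boldsymbol{n}^s$ orthogonal to $\Lambda_h$, and that this part gains the full power $h^{1/2}$ through the scaled trace--Poincar\'e inequality --- exactly the mechanism producing the factor $h$ in front of the Darcy interface residual. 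The divergence part is comparatively routine, its only delicacy being the single global constant mode, which is removed by testing the discrete normal continuity against constants.
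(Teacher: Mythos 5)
Your proof is correct and follows essentially the same route as the paper: split $\vartheta_\phi$ into the Stokes divergence part and the interface part, subtract discrete best approximations using the orthogonality built into $V_h(0)$ (a $Q_h^s$-approximation of $\phi^s$ in the volume, a $\Lambda_h$-approximation of $\phi^d$ on $\Gamma$), and gain the factor $h^{1/2}$ on the interface through a scaled trace estimate on the elements touching $\Gamma$. The only cosmetic differences are that the paper notes $\nabla\cdot\boldsymbol{\psi}_h^d=0$ exactly (via $\nabla\cdot V_h^d=Q_h^d$) where you project $\phi^d$ onto $Q_h^d$ and handle the constant mode explicitly, and it realizes your face-wise bound $\|(I-\Pi_\Lambda)(\boldsymbol{\psi}_h^s\cdot\boldsymbol{n}^s)\|_e\lesssim h_T^{1/2}|\boldsymbol{\psi}_h^s|_{1,T}$ by instead taking the elementwise $L^2$-projection $\bar{\boldsymbol{\psi}}_h^s$ of $\boldsymbol{\psi}_h^s$, whose normal trace lies in $\Lambda_h$, combined with the trace inequality and projection estimates.
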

\begin{proof}
We give the boundedness of the linear functionals on the space $V_h(0)$ in (\ref{eqn:consistency error subset}). To begin with, let $\lambda_h\in\Lambda_h$ to be the local $L^2$ projection of $\phi_{|\Gamma}^d$ into $\Lambda_h$, i.e. $\langle \phi^d-\lambda_h,\phi_h\rangle_e=0$ for any $\phi_h\in P_{k-1}(e)$, $e\in\mathcal{E}_h(\Gamma)$. For any function $\boldsymbol{\psi}_h\in V_h(0)$, noting the fact $\nabla\cdot\boldsymbol{\psi}_h^d=0$ and $\boldsymbol{\psi}_h^d\cdot\boldsymbol{n}^d_{|\Gamma}\in\Lambda_h$, it holds 
\begin{align}\label{eqn:psi_d estimate}
\langle\boldsymbol{\psi}_h^d\cdot\boldsymbol{n}^d,\phi^d-\lambda_h\rangle_{\Gamma}=0.
\end{align}
Let $\bar{\boldsymbol{\psi}}_h^s$ to be the local $L^2$ projection of $\boldsymbol{\psi}_h^s$ into $\{\boldsymbol{v}_h\in [L^2(\Omega^s)]^N~|~\boldsymbol{v}_{h|T}\in [P_{k-1}(T)]^N,~T\in \mathcal{T}_h(\Omega^s)\}$. From $\bar{\boldsymbol{\psi}}_h^s\cdot\boldsymbol{n}_{|\Gamma}^s\in \Lambda_h$, Cauchy-Schwarz inequality, trace inequality, and projection property, we have
\begin{align}\label{eqn:psi_s estimate}
\begin{aligned}
&\langle\boldsymbol{\psi}_h^s\cdot\boldsymbol{n}^s
,\phi^d-\lambda_h\rangle_{\Gamma}\\
=&\langle\boldsymbol{\psi}_h^s\cdot\boldsymbol{n}^s
-\bar{\boldsymbol{\psi}}_h^s\cdot\boldsymbol{n}^s
,\phi^d-\lambda_h\rangle_{\Gamma}\\
\leq&\|\boldsymbol{\psi}_h^s\cdot\boldsymbol{n}^s
-\bar{\boldsymbol{\psi}}_h^s\cdot\boldsymbol{n}^s\|_{\Gamma}
\inf_{\phi_h\in\Lambda_h}\|\phi^d-\phi_h\|_{\Gamma}\\
\leq&\left(\sum_{T\in\mathcal{T}_h(\Gamma^s)}h_T^{-1}
\|\boldsymbol{\psi}_h^s-\bar{\boldsymbol{\psi}}_h^s\|_{0,T}^2
+h_T|\boldsymbol{\psi}_h^s-\bar{\boldsymbol{\psi}}_h^s|_{1,T}^2\right)^{1/2}
\inf_{\phi_h\in\Lambda_h}\|\phi^d-\phi_h\|_{\Gamma}\\
\leq&\left(\sum_{T\in\mathcal{T}_h(\Gamma^s)}h_T|\boldsymbol{\psi}_h^s|_{1,T}^2\right)^{1/2}
\inf_{\phi_h\in\Lambda_h}\|\phi^d-\phi_h\|_{\Gamma},
\end{aligned}
\end{align}
where $\mathcal{T}_h(\Gamma^s)=\{T\in\mathcal{T}_h(\Omega^s)~|~T\cap\Gamma\neq\emptyset\}$. 

Then, combining with (\ref{eqn:definition of Vh}), (\ref{eqn:psi_d estimate}), and (\ref{eqn:psi_s estimate}),
it can be obtained
\begin{align*}
\vartheta_{\phi}(\boldsymbol{\psi}_h)
=&-(\nabla\cdot\boldsymbol{\psi}_h^s,\phi^s)_s
-(\nabla\cdot\boldsymbol{\psi}_h^d,\phi^d)_d
-\langle\boldsymbol{\psi}_h^s\cdot\boldsymbol{n}^s
+\boldsymbol{\psi}_h^d\cdot\boldsymbol{n}^d,\phi^d\rangle_{\Gamma}\\
=&-(\nabla\cdot\boldsymbol{\psi}_h^s,\phi^s)_s
-\langle\boldsymbol{\psi}_h^s\cdot\boldsymbol{n}^s
,\phi^d-\lambda_h\rangle_{\Gamma}\\
%\leq &\|\nabla\cdot\boldsymbol{\psi}_h^s\|_s\inf_{\phi_h\in Q_h^s}\|\phi^s-\phi_h\|_s
%+\|\boldsymbol{\psi}_h^s\cdot\boldsymbol{n}^s\|_{\Gamma}
%\inf_{\phi_h\in\Lambda_h}\|\phi^d-\phi_h\|_{\Gamma}\\
%\leq &\|\nabla\boldsymbol{\psi}_h^s\|_s\inf_{\phi_h\in Q_h^s}\|\phi^s-\phi_h\|_s
%+\left(\sum_{T\in\mathcal{T}_h(\Gamma^s)}h_T^{-1}\|\boldsymbol{\psi}_h^s\|_{0,T}^2
%+h_T|\boldsymbol{\psi}_h^s|_{1,T}^2\right)^{1/2}
%\inf_{\phi_h\in\Lambda_h}\|\phi^d-\phi_h\|_{\Gamma}\\
\leq &\|\nabla\boldsymbol{\psi}_h^s\|_s\inf_{\phi_h\in Q_h^s}\|\phi^s-\phi_h\|_s
+\left(\sum_{T\in\mathcal{T}_h(\Gamma^s)}h_T|\boldsymbol{\psi}_h^s|_{1,T}^2\right)^{1/2}
\inf_{\phi_h\in\Lambda_h}\|\phi^d-\phi_h\|_{\Gamma}\\
%\lesssim& \|\nabla\boldsymbol{\psi}_h^s\|_s\inf_{\phi_h\in Q_h^s}\|\phi-\phi_h\|_s
%+h^{-1}\|\boldsymbol{\psi}_h^d\|_d\inf_{\phi_h\in Q_h^d}\|\phi-\phi_h\|_d\\
%&+(\|\boldsymbol{\psi}_h^s\|_{1,s}
%+\|\boldsymbol{\psi}_h^d\|_{1,d})
%\inf_{\phi_h\in\Lambda_h}\|\phi-\phi_h\|_{\Gamma}\\
%\lesssim& \|\nabla\boldsymbol{\psi}_h^s\|_s\inf_{\phi_h\in Q_h^s}\|\phi^s-\phi_h\|_s
%+h^{-1}\|\boldsymbol{\psi}_h^d\|_d\inf_{\phi_h\in Q_h^d}\|\phi^d-\phi_h\|_d\\
%&+(\|\nabla\boldsymbol{\psi}_h^s\|_s
%+(1+h^{-1})\|\boldsymbol{\psi}_h^d\|_d)
%\inf_{\phi_h\in\Lambda_h}\|\phi^d-\phi_h\|_{\Gamma}\\
\leq& \left(\inf_{\phi_h\in Q_h^s}\|\phi^s-\phi_h\|_s^2+
h\inf_{\phi_h\in\Lambda_h}\|\phi^d-\phi_h\|_{\Gamma}^2\right)^{1/2}\|\boldsymbol{\psi}_h\|_X,
\end{align*}
which complete the proof.
\end{proof}

%Below we discuss a straight-forward classical discretization and some modified pressure-robust variant that replaces these errors by something qualitatively better.

It is studied how the consistency errors from the lack of pressure-robustness 
%or their pressure-robust alternatives influence the a priori estimates 
for the natural coupled energy norm induced by the PDE rather than the cost functional, see also \cite{Gaspoz2020},
%\begin{align}
%%&\|(\boldsymbol{u},\boldsymbol{z})\|_X^2=\|\boldsymbol{u}\|_V^2+\|\boldsymbol{z}\|_V^2,\\
%&\|(\boldsymbol{v}_h,\boldsymbol{\psi}_h)\|_{Y_h}^2=a_h(\boldsymbol{v}_h,\boldsymbol{v}_h)
%+a_h(\boldsymbol{\psi}_h,\boldsymbol{\psi}_h),
%\end{align}
\begin{align}\label{eqn:X norm for u and z}
%\|\boldsymbol{v}\|_X^2=a(\boldsymbol{v},\boldsymbol{v}),
%\quad 
\|(\boldsymbol{v},\boldsymbol{\psi})\|_X^2
=\|\boldsymbol{v}\|_X^2+\|\boldsymbol{\psi}\|_X^2, 
\mbox{~for~} \boldsymbol{v},\boldsymbol{\psi}\in V^s\times V^d.
\end{align}
To do so, we estimate the distance of $(\boldsymbol{u}_h,\boldsymbol{z}_h)$ to the Stokes-Darcy best-approximation, denoted by $(S_h\boldsymbol{u},S_h\boldsymbol{z})\in V_h\times V_h$, that is defined by
\begin{align}\label{eqn:Sh definition}
\begin{aligned}
a(S_h\boldsymbol{v},\boldsymbol{\psi}_h)+b(\boldsymbol{\psi}_h,\delta_h)&= a(\boldsymbol{v},\boldsymbol{\psi}_h),\quad \forall\boldsymbol{\psi}_h\in V_h,\\
b(S_h\boldsymbol{v},\phi_h)&=(g,\phi_h),\quad \forall \phi_h\in Q_h.
\end{aligned}
\end{align}
for $\boldsymbol{v}=\boldsymbol{u}$ or $\boldsymbol{z}$, respectively. Here $\delta_h$ denotes the pressure, and we have referred to a similar approach to the Stokes optimal control problem \cite{merdonPressureRobustnessContextOptimal2023}.
Then for any $\boldsymbol{v}_h\in V_h$, from (\ref{eqn:Sh definition}) and noting $(g,\phi_h)=b(\boldsymbol{v},\phi_h)$, we have 
\begin{align*}
a(S_h\boldsymbol{v}-\boldsymbol{v}_h,\boldsymbol{\psi}_h)+b(\boldsymbol{\psi}_h, \delta_h)&=a(\boldsymbol{v}-\boldsymbol{v}_h,\boldsymbol{\psi}_h),\quad \forall\boldsymbol{\psi}_h\in V_h,\\
b(S_h\boldsymbol{v}-\boldsymbol{v}_h,\phi_h)&=b(\boldsymbol{v}-\boldsymbol{v}_h,\phi_h),\quad \forall \phi_h\in Q_h.
\end{align*}
Since $(V_h,Q_h)$ is LBB-stable shown in Lemma~4.3 in \cite{Layton2002}, and operator $a(\cdot,\cdot)$ is coercive for $V^s\times\{\boldsymbol{\psi}^d\in V^d~|~\nabla\cdot\boldsymbol{\psi}^d=0\}$ shown in Lemma~3.1 in \cite{Layton2002}, then the abstract bound estimates (Theorem~1.2 in \cite{Brezzi1991}) implies, for $\boldsymbol{v}=\boldsymbol{u}$ or $\boldsymbol{z}$, 
\begin{align*}
\|S_h\boldsymbol{v}-\boldsymbol{v}_h\|_X\lesssim \|\boldsymbol{v}-\boldsymbol{v}_h\|_X,
\end{align*}
which equals 
\begin{align}\label{eqn:error u and Sh u}
\|\boldsymbol{v}-S_h\boldsymbol{v}\|_X\leq \inf_{\boldsymbol{v}_h\in V_h}(\|\boldsymbol{v}-\boldsymbol{v}_h\|_X+\|S_h\boldsymbol{v}-\boldsymbol{v}_h\|_X)
\lesssim \inf_{\boldsymbol{v}_h\in V_h}\|\boldsymbol{v}-\boldsymbol{v}_h\|_X.
\end{align} 
This shows convergence rates corresponding to the regularity of $\boldsymbol{u}$ and $\boldsymbol{z}$ and the polynomial order of $V_h$. From the triangle inequality, this approximation result 
$\|(\boldsymbol{u}-\boldsymbol{u}_h,\boldsymbol{z}-\boldsymbol{z}_h)\|_X^2$ is only perturbed by the term $\|(S_h\boldsymbol{u}-\boldsymbol{u}_h,S_h\boldsymbol{z}-\boldsymbol{z}_h)\|_X^2$ which therefore is the primal object of interest in the a priori error analysis below.

%And $\mathcal{F}(\boldsymbol{\psi}_h)$
%are defined as following
%\begin{align*}
%\mathcal{F}(\boldsymbol{\psi}_h)=
%\left\{
%\begin{aligned}
%(\boldsymbol{f}-\alpha^{-1/2}\boldsymbol{z}^s,(\boldsymbol{\psi}_h)_s)_s, \quad \mbox{~for~} \boldsymbol{v}=\boldsymbol{u},\\
%\alpha^{-1/2}(\boldsymbol{u}^*+\boldsymbol{z}^s,\boldsymbol{\psi}_h), \quad \mbox{~for~} \boldsymbol{v}=\boldsymbol{z}.
%\end{aligned}
%\right.
%\end{align*}
%From \cite{Layton2002}, we can get a priori error estimate of 
%between $(\boldsymbol{u},\boldsymbol{z})$ and $(S_h\boldsymbol{u},S_h\boldsymbol{z})$ as following
%\begin{align}
%\textcolor{red}{\|\boldsymbol{v}-S_h\boldsymbol{v}\|\lesssim h^r\|\boldsymbol{v}\|_{r+1}+\|p\|}.
%\end{align}
%\textcolor{red}{need to be added}.
%Then, $\|(S_h\boldsymbol{u}-\boldsymbol{u}_h,S_h\boldsymbol{z}-\boldsymbol{z}_h)\|_X^2$  is the primal object of interest in the a priori error analysis below.

%\subsection{Classical discretization} 
The classical variational discretization of (\ref{eqn:PDE problem}) solves the following discrete problem: seek $(\boldsymbol{u}_h,\boldsymbol{z}_h,p_h,r_h)\in V_h\times V_h\times Q_h\times Q_h$ such that
\begin{align}
\label{eqn:PDE problem clssical}
\begin{aligned}
a(\boldsymbol{u}_h,\boldsymbol{\psi}_h)+b(\boldsymbol{\psi}_h,p_h)
+\alpha^{-1/2}(\boldsymbol{z}_h,\boldsymbol{\psi}_h)
&=(\boldsymbol{f},\boldsymbol{\psi}_h),
 &&\forall \boldsymbol{\psi}_h\in V_h,\\
b(\boldsymbol{u}_h,\phi_h)&=(g,\phi_h), &&\forall \phi_h\in Q_h,\\
a(\boldsymbol{\psi}_h,\boldsymbol{z}_h)
+b(\boldsymbol{\psi}_h,r_h)
-\alpha^{-1/2}(\boldsymbol{u}_h,\boldsymbol{\psi}_h)
&=-\alpha^{-1/2}(\boldsymbol{u}^*,\boldsymbol{\psi}_h),
 &&\forall \boldsymbol{\psi}_h\in V_h,\\
b(\boldsymbol{z}_h,\phi_h)&=(g,\phi_h), &&\forall \phi_h\in Q_h.
\end{aligned} 
\end{align}
The error estimates involving the previously defined projector $S_h:V(g)\rightarrow V_h(g)$ in (\ref{eqn:Sh definition}) needs to be discussed in the following lemma.
\begin{theorem}\label{the:classical estimate}
If the exact solutions of (\ref{eqn:PDE problem}) have the regularity
$\boldsymbol{u}^s,\boldsymbol{z}^s\in [H^2(\Omega^s)]^N$, and $p^i,r^i\in H^1(\Omega^i), i=s,d$,
it holds
%\begin{align*}
%\|(\boldsymbol{u}-\boldsymbol{u}_h,\boldsymbol{z}-\boldsymbol{z}_h)\|_X^2
%\lesssim &\inf_{\phi_h\in Q_h^s}\|p^s-\phi_h\|_s^2
%+h\inf_{\phi_h\in\Lambda_h}\|p^d-\phi_h\|_{\Gamma}^2\\
%&+\inf_{\phi_h\in Q_h^s}\|r^s-\phi_h\|_s^2
%+h\inf_{\phi_h\in\Lambda_h}\|r^d-\phi_h\|_{\Gamma}^2\\
%&+\inf_{\boldsymbol{v}_h\in V_h}\|\boldsymbol{u}-\boldsymbol{v}_h\|_X^2
%+\inf_{\boldsymbol{v}_h\in V_h}\|\boldsymbol{z}-\boldsymbol{v}_h\|_X^2.
%\end{align*}
\begin{align}\label{eqn:error estimate pressure depend}
\begin{aligned}
\|(S_h\boldsymbol{u}-\boldsymbol{u}_h,S_h\boldsymbol{z}-\boldsymbol{z}_h)\|_X^2
\lesssim &\|\vartheta_p\|_{(V_h(0))^*}^2
+\|\vartheta_r\|_{(V_h(0))^*}^2\\
&+\|\boldsymbol{u}-S_h\boldsymbol{u}\|_X^2
+\|\boldsymbol{z}-S_h\boldsymbol{z}\|_X^2.
\end{aligned}
\end{align}
for the solution of $(\boldsymbol{u}_h,\boldsymbol{z}_h)$ of (\ref{eqn:PDE problem clssical}) and 
the discrete approximation $(S_h\boldsymbol{u},S_h\boldsymbol{z})$ defined in (\ref{eqn:Sh definition}) of 
the exact solutions $(\boldsymbol{u},\boldsymbol{z})$ of (\ref{eqn:PDE problem}).
More generally, if the exact solutions of have the regularity
$(\boldsymbol{u}^s,\boldsymbol{u}^d),(\boldsymbol{z}^s,\boldsymbol{z}^d)\in [H^k(\Omega^s)]^N\times [H^{k-1}(\Omega^d)]^N$, $\nabla\cdot \boldsymbol{u}^d, \nabla\cdot \boldsymbol{z}^d\in H^{k-1}(\Omega^d)$, and $p^i,r^i\in H^{k-1}(\Omega^i), i=s,d$ for any $k\geq 2$,
it holds
\begin{align}\label{eqn:error estimate regular pressure depend}
\begin{aligned}
\|(\boldsymbol{u}-\boldsymbol{u}_h,\boldsymbol{z}-\boldsymbol{z}_h)\|_X^2
\lesssim h^{2(k-1)}(&\|\boldsymbol{u}^s\|_{k,s}^2+\|\boldsymbol{u}^d\|_{k-1,d}^2+\|\nabla\cdot\boldsymbol{u}^d\|_{k-1,d}^2\\
+&\|\boldsymbol{z}^s\|_{k,s}^2+\|\boldsymbol{z}^d\|_{k-1,d}^2+\|\nabla\cdot\boldsymbol{z}^d\|_{k-1,d}^2\\
+&\|p^s\|_{k-1,s}^2+\|p^d\|_{k-1,d}^2+\|r^s\|_{k-1,s}^2+\|r^d\|_{k-1,d}^2).
\end{aligned}
\end{align}
%\textcolor{red}{This method is wrong, because $u-S_hu$ depends on the pressure. Whether $S_h-u_h$ depends on pressure, $|u-u_h|\leq |u-S_hu|+|S_hu-u_h|$ depends on pressure. 
%We need to estimate $|u-u_h|$ directly, or $|u-S_hu|$ is independent of pressure, then we estimate $S_hu-u_h$.} 
\end{theorem}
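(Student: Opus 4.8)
The plan is to reduce the whole estimate to the two Galerkin-type errors $\boldsymbol{e}_u:=S_h\boldsymbol{u}-\boldsymbol{u}_h$ and $\boldsymbol{e}_z:=S_h\boldsymbol{z}-\boldsymbol{z}_h$, both of which lie in $V_h(0)$ because $S_h\boldsymbol{u},\boldsymbol{u}_h,S_h\boldsymbol{z},\boldsymbol{z}_h\in V_h(g)$. First I would write down the equations satisfied by $\boldsymbol{e}_u,\boldsymbol{e}_z$: subtract the discrete state and adjoint equations of (\ref{eqn:PDE problem clssical}) from the defining relations (\ref{eqn:Sh definition}) of $S_h\boldsymbol{u},S_h\boldsymbol{z}$, using the symmetry of $a(\cdot,\cdot)$ to rewrite the adjoint residual. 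In these identities the discrete pressure differences $\delta_h^u-p_h$ and $\delta_h^z-r_h$ enter only through $b(\boldsymbol{\psi}_h,\cdot)$; restricting the test functions to $\boldsymbol{\psi}_h\in V_h(0)$ annihilates every such term and leaves only the \emph{continuous} pressures $p,r$.

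The crux — and the step I expect to be the main obstacle — is to control $a(\boldsymbol{u},\boldsymbol{\psi}_h)$ and $a(\boldsymbol{\psi}_h,\boldsymbol{z})$ for \emph{nonconforming} $\boldsymbol{\psi}_h\in V_h\not\subset V$. Since the exact pair $(\boldsymbol{u},p)$ solves the strong Stokes--Darcy system with source $\boldsymbol{f}+\boldsymbol{\eta}=\boldsymbol{f}-\alpha^{-1/2}\boldsymbol{z}$ (and $(\boldsymbol{z},r)$ the adjoint system with source $\alpha^{-1/2}(\boldsymbol{u}-\boldsymbol{u}^*)$), I would test the strong equations against $\boldsymbol{\psi}_h^s,\boldsymbol{\psi}_h^d$ and integrate by parts on each subdomain. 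The delicate point is that the $\Gamma$-integrals no longer cancel: the normal-force balance (\ref{eqn:state boundary 2}) turns the normal stress term into an interface pressure term, the Beavers--Joseph--Saffman condition (\ref{eqn:state boundary 3}) recovers $a_I$, and the Stokes volume term retains $-(\nabla\cdot\boldsymbol{\psi}_h^s,p^s)_s$. Getting this interface bookkeeping and the signs right is exactly where the Stokes--Darcy coupling enters. Collecting the divergence and interface contributions of $p$ (resp. $r$), up to sign, into the consistency functional $\vartheta_p$ (resp. $\vartheta_r$) of (\ref{eqn:consistency error functional}) and splitting $\boldsymbol{z}-\boldsymbol{z}_h,\boldsymbol{u}-\boldsymbol{u}_h$ through $S_h$, the error identities read, for all $\boldsymbol{\psi}_h\in V_h(0)$,
\begin{align*}
a(\boldsymbol{e}_u,\boldsymbol{\psi}_h)+\alpha^{-1/2}(\boldsymbol{e}_z,\boldsymbol{\psi}_h)
&=-\vartheta_p(\boldsymbol{\psi}_h)-\alpha^{-1/2}(\boldsymbol{z}-S_h\boldsymbol{z},\boldsymbol{\psi}_h),\\
a(\boldsymbol{e}_z,\boldsymbol{\psi}_h)-\alpha^{-1/2}(\boldsymbol{e}_u,\boldsymbol{\psi}_h)
&=-\vartheta_r(\boldsymbol{\psi}_h)+\alpha^{-1/2}(\boldsymbol{u}-S_h\boldsymbol{u},\boldsymbol{\psi}_h).
\end{align*}

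Next I would run the standard energy argument. Choosing $\boldsymbol{\psi}_h=\boldsymbol{e}_u$ in the first identity and $\boldsymbol{\psi}_h=\boldsymbol{e}_z$ in the second and adding, the skew coupling $\alpha^{-1/2}(\boldsymbol{e}_z,\boldsymbol{e}_u)-\alpha^{-1/2}(\boldsymbol{e}_u,\boldsymbol{e}_z)$ cancels by symmetry of the $L^2$ product — this cancellation is precisely what the rescaled adjoint $\boldsymbol{z}$ is designed to deliver. Because every element of $V_h(0)$ satisfies $\nabla\cdot\boldsymbol{\psi}_h^d=0$, the coercivity of $a$ on $V^s\times\{\nabla\cdot\boldsymbol{\psi}^d=0\}$ together with the discrete Korn inequality gives $a(\boldsymbol{e}_u,\boldsymbol{e}_u)+a(\boldsymbol{e}_z,\boldsymbol{e}_z)\gtrsim\|(\boldsymbol{e}_u,\boldsymbol{e}_z)\|_X^2$. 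On the right I would bound $|\vartheta_p(\boldsymbol{e}_u)|\le\|\vartheta_p\|_{(V_h(0))^*}\|\boldsymbol{e}_u\|_X$, likewise for $\vartheta_r$, and estimate the remaining $L^2$ pairings by $\|\boldsymbol{z}-S_h\boldsymbol{z}\|_X\|\boldsymbol{e}_u\|_X$ and $\|\boldsymbol{u}-S_h\boldsymbol{u}\|_X\|\boldsymbol{e}_z\|_X$ using $\|\cdot\|_0\lesssim\|\cdot\|_X$ (Poincar\'e on $\Omega^s$). A Young inequality absorbs the $\|\boldsymbol{e}_u\|_X,\|\boldsymbol{e}_z\|_X$ factors into the left-hand side and yields (\ref{eqn:error estimate pressure depend}).

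Finally, for the convergence rate (\ref{eqn:error estimate regular pressure depend}) I would combine (\ref{eqn:error estimate pressure depend}) with the triangle inequality $\|(\boldsymbol{u}-\boldsymbol{u}_h,\boldsymbol{z}-\boldsymbol{z}_h)\|_X\le\|(\boldsymbol{u}-S_h\boldsymbol{u},\boldsymbol{z}-S_h\boldsymbol{z})\|_X+\|(\boldsymbol{e}_u,\boldsymbol{e}_z)\|_X$ and the quasi-optimality (\ref{eqn:error u and Sh u}) of $S_h$. It then remains to insert the standard interpolation estimates for the chosen spaces, namely $\inf_{\boldsymbol{v}_h\in V_h}\|\boldsymbol{v}-\boldsymbol{v}_h\|_X\lesssim h^{k-1}(\|\boldsymbol{v}^s\|_{k,s}+\|\boldsymbol{v}^d\|_{k-1,d}+\|\nabla\cdot\boldsymbol{v}^d\|_{k-1,d})$ for $\boldsymbol{v}=\boldsymbol{u},\boldsymbol{z}$, and to feed the polynomial approximation bounds for $p,r$ into Lemma~\ref{lem:consistency error}, giving $\|\vartheta_p\|_{(V_h(0))^*}^2+\|\vartheta_r\|_{(V_h(0))^*}^2\lesssim h^{2(k-1)}(\|p^s\|_{k-1,s}^2+\|p^d\|_{k-1,d}^2+\|r^s\|_{k-1,s}^2+\|r^d\|_{k-1,d}^2)$, where the weighted $\Gamma$-contribution in Lemma~\ref{lem:consistency error} carries an extra factor of $h$ and is therefore of higher order. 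Summing these contributions produces the stated $h^{2(k-1)}$ bound, of which the case $k=2$ is the first assertion.
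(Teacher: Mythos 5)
Your proposal is correct and follows essentially the same route as the paper: the same nonconforming integration-by-parts identity producing $\vartheta_p,\vartheta_r$, the same skew-symmetric cancellation of the $\alpha^{-1/2}$ coupling after testing with $\boldsymbol{e}_u,\boldsymbol{e}_z$, coercivity of $a$ on discretely divergence-free functions via $\nabla\cdot V_h^d=Q_h^d$, Poincar\'e--Friedrichs plus Young absorption, and finally the triangle inequality with the quasi-optimality of $S_h$, the interpolation estimates, and Lemma~\ref{lem:consistency error} (where the factor $h$ on the interface term compensates the half-order trace loss, giving the same order $h^{2(k-1)}$ rather than a genuinely higher one, as your wording suggests). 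The only cosmetic deviation is that you use the Galerkin orthogonality $a(S_h\boldsymbol{v}-\boldsymbol{v},\boldsymbol{\psi}_h)=-b(\boldsymbol{\psi}_h,\delta_h)=0$ on $V_h(0)$ to eliminate the term the paper retains as $I_1$ and bounds by continuity of $a$; the resulting estimate (\ref{eqn:error estimate pressure depend}) is identical, since the $\|\boldsymbol{v}-S_h\boldsymbol{v}\|$ contributions reenter through the $L^2$ coupling terms in any case.
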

\begin{proof}
%From (\ref{eqn:state Stokes}), (\ref{eqn:state Darcy}), and the usual integration by parts, it obtain 

For any $\boldsymbol{\psi}_h\in V_h(g)$, according to the definition of $a(\cdot,\cdot)$ in (\ref{eqn:state weak 1}), we have
\begin{align}
\label{eqn:u and psi_h}
a(\boldsymbol{u},\boldsymbol{\psi}_h)
&=2\mu(D(\boldsymbol{u}^s),D(\boldsymbol{\psi}_h^s))_s
+\mu(K^{-1}\boldsymbol{u}^d,\boldsymbol{\psi}_h^d)
+\sum_{j=1}^{N-1}\frac{\alpha_1\mu}{\sqrt{\kappa_j}}\langle\boldsymbol{u}^s\cdot\boldsymbol{\tau}_j
,\boldsymbol{\psi}_h^s\cdot\boldsymbol{\tau}_j\rangle_{\Gamma}.
%&=-2\mu(\nabla\cdot D(\boldsymbol{u}^s),\boldsymbol{\psi}_h^s)_s
%+2\mu\langle D(\boldsymbol{u}^s)\boldsymbol{n}^s,\boldsymbol{\psi}_h^s\rangle_{\Gamma}
%+(K^{-1}\boldsymbol{u}^d,\boldsymbol{\psi}_h^d)_d
%+\frac{\alpha_1}{\sqrt{\kappa}}\langle\boldsymbol{u}^s\cdot\boldsymbol{\tau}
%,\boldsymbol{\psi}_h^s\cdot\boldsymbol{\tau}\rangle_{\Gamma}\\
%&=(\boldsymbol{f},\boldsymbol{\psi}_h)
%-\alpha^{-1/2}(\boldsymbol{z},\boldsymbol{\psi}_h)+I
\end{align}
Since $\boldsymbol{u}^s\in [H^2(\Omega^s)]^N$ and $p^i\in H^1(\Omega^i), i=s,d$, the solutions $(\boldsymbol{u},p)$ of (\ref{eqn:PDE problem}) satisfy (\ref{eqn:state Stokes})$\sim$(\ref{eqn:state boundary 3}) by replacing $\boldsymbol{f}$ with 
$\boldsymbol{f}-\alpha^{-1/2}\boldsymbol{\eta}$.
Then, based on the usual integration by parts and the boundary condition (\ref{eqn:state boundary 3}), the first and the third summands in (\ref{eqn:u and psi_h}) can be derived into
\begin{align}
\label{eqn:the first summand}
&2\mu(D(\boldsymbol{u}^s),D(\boldsymbol{\psi}_h^s))_s=-2\mu(\nabla\cdot D(\boldsymbol{u}^s),\boldsymbol{\psi}_h^s)_s
+2\mu\langle D(\boldsymbol{u}^s)\boldsymbol{n}^s,\boldsymbol{\psi}_h^s\rangle_{\Gamma},\\
\label{eqn:the third summand}
&\sum_{j=1}^{N-1}\frac{\alpha_1\mu}{\sqrt{\kappa_j}}\langle\boldsymbol{u}^s\cdot\boldsymbol{\tau}_j
,\boldsymbol{\psi}_h^s\cdot\boldsymbol{\tau}_j\rangle_{\Gamma}
=\sum_{j=1}^{N-1}-2\mu\langle D(\boldsymbol{u}^s)\boldsymbol{n}^s\cdot\boldsymbol{\tau}_j
,\boldsymbol{\psi}_h^s\cdot\boldsymbol{\tau}_j\rangle_{\Gamma}.
\end{align}

From (\ref{eqn:u and psi_h}), (\ref{eqn:the first summand}), (\ref{eqn:the third summand}), and noting the Stokes-Darcy equations in (\ref{eqn:state Stokes}) and (\ref{eqn:state Darcy}), and basing on the equality
\begin{align*} 
2\mu\langle D(\boldsymbol{u}^s)\boldsymbol{n}^s,\boldsymbol{\psi}_h^s\rangle_{\Gamma}
=\sum_{j=1}^{N-1}2\mu\langle D(\boldsymbol{u}^s)\boldsymbol{n}^s\cdot\boldsymbol{\tau}_j
,\boldsymbol{\psi}_h^s\cdot\boldsymbol{\tau}_j\rangle_{\Gamma}
+2\mu\langle D(\boldsymbol{u}^s)\boldsymbol{n}^s\cdot\boldsymbol{n}^s
,\boldsymbol{\psi}_h^s\cdot\boldsymbol{n}^s\rangle_{\Gamma},
\end{align*}
we have
\begin{align*}
&a(\boldsymbol{u},\boldsymbol{\psi}_h)\\
=&(\boldsymbol{f},\boldsymbol{\psi}_h)
-\alpha^{-1/2}(\boldsymbol{z},\boldsymbol{\psi}_h)
-(\nabla p^s,\boldsymbol{\psi}_h^s)_s
-(\nabla p^d,\boldsymbol{\psi}_h^d)_d
+2\mu\langle D(\boldsymbol{u}^s)\boldsymbol{n}^s\cdot\boldsymbol{n}^s
,\boldsymbol{\psi}_h^s\cdot\boldsymbol{n}^s\rangle_{\Gamma}.
\end{align*}
By using the integration by parts for the third and fourth term in the above equation, and nothing the boundary condition in (\ref{eqn:state boundary 2}), it can be obtained that
\begin{align*}
&-(\nabla p^s,\boldsymbol{\psi}_h^s)_s
-(\nabla p^d,\boldsymbol{\psi}_h^d)_d\\
=&(p^s,\nabla\cdot\boldsymbol{\psi}_h^s)_s
-\langle p^s,\boldsymbol{\psi}_h^s\cdot\boldsymbol{n}^s\rangle_{\Gamma}
+(p^d,\nabla\cdot\boldsymbol{\psi}_h^d)_d
-\langle p^d,\boldsymbol{\psi}_h^d\cdot\boldsymbol{n}^d\rangle_{\Gamma}\\
=&-b(\boldsymbol{\psi}_h,p)-2\mu\langle D(\boldsymbol{u}^s)\boldsymbol{n}^s\cdot\boldsymbol{n}^s
,\boldsymbol{\psi}_h^s\cdot\boldsymbol{n}^s\rangle_{\Gamma}
-\langle p^d,\boldsymbol{\psi}_h^s\cdot\boldsymbol{n}^s
+\boldsymbol{\psi}_h^d\cdot\boldsymbol{n}^d\rangle_{\Gamma}
\end{align*}
%\textcolor{red}{This inequality need $V_h\subset V$}.
%with $I:=-(\nabla p^s,\boldsymbol{\psi}_h^s)_s
%-(\nabla p^d,\boldsymbol{\psi}_h^d)_d
%+2\mu\langle D(\boldsymbol{u}^s)\boldsymbol{n}^s,\boldsymbol{\psi}_h^s\rangle_{\Gamma}
%+\frac{\alpha_1}{\sqrt{\kappa}}\langle\boldsymbol{u}^s\cdot\boldsymbol{\tau}
%,\boldsymbol{\psi}_h^s\cdot\boldsymbol{\tau}\rangle_{\Gamma}$.
%Next, we will simplify the item $I$ as follows
%\begin{align*}
%I&=(p^s,\nabla\cdot\boldsymbol{\psi}_h^s)_s
%-\langle p^s,\boldsymbol{\psi}_h^s\cdot\boldsymbol{n}^s\rangle_{\Gamma}
%+(p^d,\nabla\cdot\boldsymbol{\psi}_h^d)_d
%-\langle p^d,\boldsymbol{\psi}_h^d\cdot\boldsymbol{n}^d\rangle_{\Gamma}
%+2\mu\langle D(\boldsymbol{u}^s)\boldsymbol{n}^s,\boldsymbol{\psi}_h^s\rangle_{\Gamma}\\
%&\quad-2\mu\langle D(\boldsymbol{u}^s)\boldsymbol{n}^s\cdot\boldsymbol{\tau}
%,\boldsymbol{\psi}_h^s\cdot\boldsymbol{\tau}\rangle_{\Gamma}\\
%&=(p^s,\nabla\cdot\boldsymbol{\psi}_h^s)_s+(p^d,\nabla\cdot\boldsymbol{\psi}_h^d)_d
%\end{align*}
Thus, combining with (\ref{eqn:consistency error functional}), we have
\begin{align}
\label{eqn:u and phi_h simple form}
\begin{aligned}
a(\boldsymbol{u},\boldsymbol{\psi}_h)
=&(\boldsymbol{f},\boldsymbol{\psi}_h)
-\alpha^{-1/2}(\boldsymbol{z},\boldsymbol{\psi}_h)
-b(\boldsymbol{\psi}_h,p)
-\langle p^d,\boldsymbol{\psi}_h^s\cdot\boldsymbol{n}^s
+\boldsymbol{\psi}_h^d\cdot\boldsymbol{n}^d\rangle_{\Gamma}\\
=&(\boldsymbol{f},\boldsymbol{\psi}_h)
-\alpha^{-1/2}(\boldsymbol{z},\boldsymbol{\psi}_h)
-\vartheta_p(\boldsymbol{\psi}_h).
\end{aligned}
\end{align}

From (\ref{eqn:Sh definition}), it is clear that $S_h\boldsymbol{u}\in V_h(g)$. Since $S_h\boldsymbol{u}-\boldsymbol{u}_h\in V_h(0)$ and $\nabla\cdot V_h^d=Q_h^d$, it can be obtained $\nabla\cdot (S_h\boldsymbol{u}-\boldsymbol{u}_h)^d=0$. Then, from Lemma~3.1 in \cite{Layton2002}, (\ref{eqn:PDE problem clssical}), and (\ref{eqn:u and phi_h simple form}), it reveals
\begin{align*}
\|S_h\boldsymbol{u}-\boldsymbol{u}_h\|_X^2\lesssim & a(S_h\boldsymbol{u}-\boldsymbol{u}_h,S_h\boldsymbol{u}-\boldsymbol{u}_h)\\
= &a(S_h\boldsymbol{u}-\boldsymbol{u},S_h\boldsymbol{u}-\boldsymbol{u}_h)
+a(\boldsymbol{u}-\boldsymbol{u}_h,S_h\boldsymbol{u}-\boldsymbol{u}_h)\\
= &a(S_h\boldsymbol{u}-\boldsymbol{u},S_h\boldsymbol{u}-\boldsymbol{u}_h)
+(\boldsymbol{f}-\alpha^{-1/2}\boldsymbol{z},S_h\boldsymbol{u}-\boldsymbol{u}_h)
-\vartheta_p(S_h\boldsymbol{u}-\boldsymbol{u}_h)\\
&-(\boldsymbol{f}-\alpha^{-1/2}\boldsymbol{z}_h,S_h\boldsymbol{u}-\boldsymbol{u}_h)\\
= &a(S_h\boldsymbol{u}-\boldsymbol{u},S_h\boldsymbol{u}-\boldsymbol{u}_h)
-\alpha^{-1/2}(\boldsymbol{z}-\boldsymbol{z}_h,S_h\boldsymbol{u}-\boldsymbol{u}_h)
-\vartheta_p(S_h\boldsymbol{u}-\boldsymbol{u}_h)
\end{align*}
Analogously, one shows that
\begin{align*}
\|S_h\boldsymbol{z}-\boldsymbol{z}_h\|_X^2\lesssim 
a(S_h\boldsymbol{z}-\boldsymbol{z},S_h\boldsymbol{z}-\boldsymbol{z}_h)
+\alpha^{-1/2}(\boldsymbol{u}-\boldsymbol{u}_h,S_h\boldsymbol{z}-\boldsymbol{z}_h)
-\vartheta_r(S_h\boldsymbol{z}-\boldsymbol{z}_h)
\end{align*}

From the boundness of $a(\cdot,\cdot)$, we have
\begin{align*}
I_1=&a(S_h\boldsymbol{u}-\boldsymbol{u},S_h\boldsymbol{u}-\boldsymbol{u}_h)
+a(S_h\boldsymbol{z}-\boldsymbol{z},S_h\boldsymbol{z}-\boldsymbol{z}_h)\\
\lesssim & \|(S_h\boldsymbol{u}-\boldsymbol{u},S_h\boldsymbol{z}-\boldsymbol{z})\|_X
\|(S_h\boldsymbol{u}-\boldsymbol{u}_h,,S_h\boldsymbol{z}-\boldsymbol{z}_h)\|_X.
\end{align*}
%from the definition of the norm $\|\cdot\|_X$, (\ref{eqn:Stokes-Darcy best-approximation}), and (\ref{eqn:u and phi_h simple form}), it reveals
%\begin{align*}
%\mu \|S_h\boldsymbol{u}-\boldsymbol{u}_h\|_X^2
%&=\mu a(\boldsymbol{u}-\boldsymbol{u}_h,S_h\boldsymbol{u}-\boldsymbol{u}_h)\\
%&=(\boldsymbol{f}-\alpha^{-1/2}\boldsymbol{z},S_h\boldsymbol{u}-\boldsymbol{u}_h)
%-\vartheta_p(S_h\boldsymbol{u}-\boldsymbol{u}_h)
%-(\boldsymbol{f}-\alpha^{-1/2}\boldsymbol{z}_h,S_h\boldsymbol{u}-\boldsymbol{u}_h)\\
%&=-\alpha^{-1/2}(\boldsymbol{z}-\boldsymbol{z}_h,S_h\boldsymbol{u}-\boldsymbol{u}_h)
%-\vartheta_p(S_h\boldsymbol{u}-\boldsymbol{u}_h).
%\end{align*}
%\begin{align*}
%\mu \|S_h\boldsymbol{u}-\boldsymbol{u}_h\|_X^2
%&=\mu a_h(\boldsymbol{u}-\boldsymbol{u}_h,S_h\boldsymbol{u}-\boldsymbol{u}_h)\\
%&=(\boldsymbol{f}-\alpha^{-1/2}\boldsymbol{z},S_h\boldsymbol{u}-\boldsymbol{u}_h)
%-b(p,S_h\boldsymbol{u}-\boldsymbol{u}_h)
%-(\boldsymbol{f}-\alpha^{-1/2}\boldsymbol{z}_h,S_h\boldsymbol{u}-\boldsymbol{u}_h)\\
%&=-\alpha^{-1/2}(\boldsymbol{z}-\boldsymbol{z}_h,S_h\boldsymbol{u}-\boldsymbol{u}_h)
%-b(p,S_h\boldsymbol{u}-\boldsymbol{u}_h).
%\end{align*}
%Analogously, one shows that
%\begin{align*}
%\mu \|S_h\boldsymbol{z}-\boldsymbol{z}_h\|_X^2
%=\alpha^{-1/2}(\boldsymbol{u}-\boldsymbol{u}_h,S_h\boldsymbol{z}-\boldsymbol{z}_h)
%-\vartheta_r(S_h\boldsymbol{z}-\boldsymbol{z}_h).
%\end{align*}
Then, using
\begin{align*}
(\boldsymbol{z}-\boldsymbol{z}_h,S_h\boldsymbol{u}-\boldsymbol{u}_h)
&=(S_h\boldsymbol{z}-\boldsymbol{z}_h,S_h\boldsymbol{u}-\boldsymbol{u}_h)
+(\boldsymbol{z}-S_h\boldsymbol{z},S_h\boldsymbol{u}-\boldsymbol{u}_h)\\
(\boldsymbol{u}-\boldsymbol{u}_h,S_h\boldsymbol{z}-\boldsymbol{z}_h)
&=(S_h\boldsymbol{u}-\boldsymbol{u}_h,S_h\boldsymbol{z}-\boldsymbol{z}_h)
+(\boldsymbol{u}-S_h\boldsymbol{u},S_h\boldsymbol{z}-\boldsymbol{z}_h)
\end{align*}
one obtains
\begin{align*}
I_2:&=(\boldsymbol{z}-\boldsymbol{z}_h,S_h\boldsymbol{u}-\boldsymbol{u}_h)
-(\boldsymbol{u}-\boldsymbol{u}_h,S_h\boldsymbol{z}-\boldsymbol{z}_h)\\
&=(\boldsymbol{z}-S_h\boldsymbol{z},S_h\boldsymbol{u}-\boldsymbol{u}_h)
-(\boldsymbol{u}-S_h\boldsymbol{u},S_h\boldsymbol{z}-\boldsymbol{z}_h)
\end{align*}
which can be estimated by, from the Poincar\'e-Friedrichs inequality (Lemma B.66 in \cite{Ern2004}), 
\begin{align*}
|I_2|&\leq (\|\boldsymbol{u}-S_h\boldsymbol{u}\|^2+\|\boldsymbol{z}-S_h\boldsymbol{z}\|^2)^{1/2}
(\|S_h\boldsymbol{u}-\boldsymbol{u}_h\|^2+\|S_h\boldsymbol{z}-\boldsymbol{z}_h\|^2)^{1/2}\\
&\lesssim (\|\boldsymbol{u}-S_h\boldsymbol{u}\|^2+\|\boldsymbol{z}-S_h\boldsymbol{z}\|^2)^{1/2}
(\|S_h\boldsymbol{u}-\boldsymbol{u}_h\|_X^2+\|S_h\boldsymbol{z}-\boldsymbol{z}_h\|_X^2)^{1/2}.
\end{align*}
Analogously, for
\begin{align*}
I_3:&=\vartheta_p(S_h\boldsymbol{u}-\boldsymbol{u}_h)
+\vartheta_r(S_h\boldsymbol{z}-\boldsymbol{z}_h)
\end{align*}
one obtains the estimate
\begin{align*}
|I_3|\leq \left(\|\vartheta_p\|_{(V_h(0))^*}^2+\|\vartheta_r\|_{(V_h(0))^*}^2\right)^{1/2}
\left(\|S_h\boldsymbol{u}-\boldsymbol{u}_h\|_X^2
+\|S_h\boldsymbol{z}-\boldsymbol{z}_h\|_X^2\right)^{1/2}
\end{align*}
%\begin{align*}
%|B|&=b(p-\pi_h p,S_h\boldsymbol{u}-\boldsymbol{u}_h)+b(r-\pi_h r,S_h\boldsymbol{z}-\boldsymbol{z}_h)\\
%&\lesssim \|p^s-\pi_hp^s\|\|\nabla(S_h\boldsymbol{u}-\boldsymbol{u}_h)^s\|_s
%+h^{-1}\|p^d-\pi_hp^d\|\|(S_h\boldsymbol{u}-\boldsymbol{u}_h)^d\|_d\\
%&\quad+\|r_s-\pi_hr_s\|\|\nabla(S_h\boldsymbol{z}-\boldsymbol{z}_h)_s\|_s
%+h^{-1}\|r_d-\pi_hr_d\|\|(S_h\boldsymbol{z}-\boldsymbol{z}_h)_d\|_d\\
%&\lesssim (\|p^s-\pi_hp^s\|^2+h^{-2}\|p^d-\pi_hp^d\|^2+\|r_s-\pi_hr_s\|^2+h^{-2}\|r_d-\pi_hr_d\|^2)^{1/2}\\
%&\quad\|(S_h\boldsymbol{u}-\boldsymbol{u}_h,S_h\boldsymbol{z}-\boldsymbol{z}_h)\|_X.
%\end{align*}
Combining with $\|(S_h\boldsymbol{u}-\boldsymbol{u}_h,S_h\boldsymbol{z}-\boldsymbol{z}_h)\|_X^2
=I_1+I_2+I_3$, we get (\ref{eqn:error estimate pressure depend}).

Next, we will prove (\ref{eqn:error estimate regular pressure depend}). If $p^i\in H^{k-1}(\Omega^i), i=s,d$ for any $k\geq 2$, let $\phi_h^d$ to be the local $L^2$ projection of $p^d$ into $\{q_h\in L^2(\Omega^d)~|~q_{h|T}\in P_{k-1}(T),~T\in \mathcal{T}_h(\Omega^d)\}$.
From Lemma~\ref{lem:consistency error}, projection property, and trace inequality, we have
\begin{align}\label{eqn:cosistency error p}
\begin{aligned}
\|\vartheta_p\|_{(V_h(0))^*}^2
\lesssim &
\inf_{\phi_h\in Q_h^s}\|p^s-\phi_h\|_s^2
+h\inf_{\phi_h\in\Lambda_h}\|p^d-\phi_h\|_{\Gamma}^2\\
\lesssim & h^{2(k-1)}\|p^s\|_{k-1,s}^2+ h\|p^d-\phi_h^d\|_{\Gamma}^2\\
\lesssim & h^{2(k-1)}\|p^s\|_{k-1,s}^2+ h\sum_{T\in\mathcal{T}_h(\Gamma)}h_T^{-1}\|p^d-\phi_h^d\|_T^2+h_T\|\nabla(p^d-\phi_h^d)\|_T^2\\
\lesssim & h^{2(k-1)}\|p^s\|_{k-1,s}^2+h\sum_{T\in\mathcal{T}_h(\Gamma)}h_T^{2(k-1)-1}\|p^d\|_{k-1,T}^2\\
\lesssim & h^{2(k-1)}(\|p^s\|_{k-1,s}^2+\|p^d\|_{k-1,d}^2),
\end{aligned}
\end{align}
where $\mathcal{T}_h(\Gamma)$ is defined in ($\ref{eqn:psi_s estimate}$). Similarly, $\|\vartheta_r\|_{(V_h(0))^*}$ has the following estimate
\begin{align}\label{eqn:cosistency error r}
\|\vartheta_r\|_{(V_h(0))^*}^2
\lesssim & h^{2(k-1)}(\|r^s\|_{k-1,s}^2+\|r^d\|_{k-1,d}^2).
\end{align}

For the terms $\|\boldsymbol{v}-S_h\boldsymbol{v}\|_X^2$ in (\ref{eqn:error estimate pressure depend}), with $\boldsymbol{v}=\boldsymbol{u}$ or $\boldsymbol{z}$, if $(\boldsymbol{v}^s,\boldsymbol{v}^d)\in [H^k(\Omega^s)]^N\times [H^{k-1}(\Omega^d)]^N$ and $\nabla\cdot \boldsymbol{v}^d\in H^{k-1}(\Omega^d)$ for any $k\geq 2$, from the approximation property (\ref{eqn:error u and Sh u}) and the interpolation properties introduced in (4.3), (4.13), (4.14), and (4.35) in \cite{Layton2002}, it is obtained
\begin{align}\label{eqn:approximation error u and z}
\begin{aligned}
\|\boldsymbol{v}-S_h\boldsymbol{v}\|_X^2\lesssim &\inf_{\boldsymbol{v}_h\in V_h}\|\boldsymbol{v}-\boldsymbol{v}_h\|_X^2\\
=& \inf_{\boldsymbol{v}_h\in V_h}|\boldsymbol{v}^s-\boldsymbol{v}_h^s|_{1,s}^2+\|\boldsymbol{v}^d-\boldsymbol{v}_h^d\|_d^2
+\|\nabla\cdot(\boldsymbol{v}^d-\boldsymbol{v}_h^d)\|_d^2\\
\lesssim & h^{2(k-1)}(\|\boldsymbol{v}^s\|_{k,s}^2+\|\boldsymbol{v}^d\|_{k-1,d}^2+\|\nabla\cdot \boldsymbol{v}^d\|_{k-1,d}^2).
\end{aligned}
\end{align}
Combining with (\ref{eqn:cosistency error p}), (\ref{eqn:cosistency error r}), and (\ref{eqn:approximation error u and z}), it yields (\ref{eqn:error estimate regular pressure depend}).
%\begin{align*}
%\|(S_h\boldsymbol{u}-\boldsymbol{u}_h,S_h\boldsymbol{z}-\boldsymbol{z}_h)\|_X\leq& \mu^{-1}\left(\|\vartheta_p\|_{(V_h(g))^*}^2+\|\vartheta_r\|_{(V_h(g))^*}^2\right)^{1/2}\\
%&+\alpha^{-1/2}\mu^{-1}\left(\|\boldsymbol{u}-S_h\boldsymbol{u}\|^2
%+\|\boldsymbol{z}-S_h\boldsymbol{z}\|^2\right)^{1/2}.
%\end{align*}
%This concludes the proof.
\end{proof}
\begin{remark}
The pressure dependency of dual norms $\|\vartheta_p\|_{(V_h(0))^*}^2$ and $\|\vartheta_r\|_{(V_h(0))^*}^2$ come from the presence of a function in $V_h(0)$, which either does not satisfy exactly divergence-free or does not satisfy interface normal continuity.
\end{remark}
\section{Pressure-robust discretization}
In this section,  we will apply a reconstruction operator to the discretization scheme to achieve exactly divergence-free and interface normal continuity, thereby obtaining a pressure-robust discretization. Therefore, for any $T\in\mathcal{T}_h$, we consider the Raviart-Thomas space $RT_{k-1}(T)=[P_{k-1}(T)]^N+\boldsymbol{x}P_{k-1}(T)$ for $k\geq 2$. We get for $\boldsymbol{v}_h\in RT_{k-1}(T)$ that $\nabla\cdot\boldsymbol{v}_h\in P_{k-1}(T)$,
$\boldsymbol{v}_h\cdot\boldsymbol{n}|_e\in P_{k-1}(e), e\subset\partial T$.
%, and \begin{align*}
%\dim RT_{k-1}=\left\{
%\begin{aligned}
%&k(k+2),&N=2,\\
%&k(k+1)(k+3),&N=3.
%\end{aligned}
%\right.
%\end{align*}
Then, define
\begin{align*}
\Upsilon_h^i=\{\boldsymbol{v}_h\in H(div;\Omega^i)~|~\boldsymbol{v}_{h|T}\in RT_{k-1}(T),\forall T\in\mathcal{T}_h, \boldsymbol{v}_h\cdot \boldsymbol{n}=0 \mbox{~on~}\Gamma^i\},
\end{align*}
with $i=s,d$, respectively. 
Note that $\Upsilon_h^d=V_h^d$.
%, together with, ensures that the discrete formulation satisfies exactly interface normal continuity. 
Then, we define reconstruction operators $\Pi^i_h: V^i\rightarrow \Upsilon_h^i$ locally by 
\begin{align}
\label{eqn:Pi 1}
&(\Pi_h^i\boldsymbol{v}-\boldsymbol{v},\boldsymbol{\psi}_h)_T=0,\quad \boldsymbol{\psi}_h\in [P_{k-2}(T)]^N,\\
\label{eqn:Pi 2}
&\langle(\Pi_h^i\boldsymbol{v}-\boldsymbol{v})\cdot\boldsymbol{n},q_h\rangle_e,\quad
q_h\in P_{k-1}(e),
e\subset \partial T,
\end{align}
for any element  $T\subset\Omega^i$. From (2.5.10) and Proposition 2.5.1 in \cite{Boffi2013}, the reconstruction operators are well defined with the property
\begin{align}\label{eqn:Pi inequality}
\|\boldsymbol{v}-\Pi_h^i\boldsymbol{v}\|_{\ell,T}\lesssim h_T^{m-\ell}|\boldsymbol{v}|_{m,T},\quad  \ell=0,1,
\end{align}
for any $\boldsymbol{v}\in H^m(\Omega^i)$ and $1\leq m\leq k$.

%Define $\Pi_h=\Pi_h^s\times \Pi_h^d: V^s\times V^d\rightarrow \Upsilon_h=\Upsilon_h^s\times \Upsilon_h^d$ that maps discretely divergence-free functions to exactly divergence-free function, i.e., it holds
%\begin{align}
%%&\Pi_h^s:V_h(g)\rightarrow \Upsilon_h\cap \{\boldsymbol{\psi}\in H(div,\Omega)~|~ \nabla\cdot\boldsymbol{\psi}=0\},\\
%&\Pi_h:V_h(g)\rightarrow \Upsilon_h\cap \{\boldsymbol{\psi}\in H(div,\Omega)~|~ \nabla\cdot\boldsymbol{\psi}=0\}.
%\end{align}
\begin{lemma}\label{lem:Pi}
Define $\Pi_h=\Pi_h^s\times \Pi_h^d: V^s\times V^d\rightarrow \Upsilon_h=\Upsilon_h^s\times \Upsilon_h^d$, which
has the following properties
\begin{align}
&\Pi_h:V_h(0)\rightarrow \Upsilon_h\cap \Upsilon.
\end{align}
where
\begin{align*}
\Upsilon=\{\boldsymbol{\psi}\in H(div,\Omega)~|~ \nabla\cdot\boldsymbol{\psi}=0,~ 
&(\boldsymbol{\psi} _s\cdot\boldsymbol{n}^s)_{|\Gamma^s}=0, ~
(\boldsymbol{\psi} _d\cdot\boldsymbol{n}^d)_{|\Gamma^d}=0, \\ \mbox{~and~}&(\boldsymbol{\psi}^s\cdot\boldsymbol{n}^s
+\boldsymbol{\psi}^d\cdot\boldsymbol{n}^d)_{|\Gamma}=0\}.
\end{align*}
\end{lemma}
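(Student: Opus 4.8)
The plan is to check membership in $\Upsilon_h\cap\Upsilon$ by verifying the defining conditions of $\Upsilon$ one at a time, after first recording the two local degree-of-freedom properties of the Raviart--Thomas reconstruction that carry the argument. Membership in $\Upsilon_h$ is immediate because $\Pi_h^i$ maps into $\Upsilon_h^i$ by construction, and the two homogeneous normal-trace conditions on $\Gamma^s$ and $\Gamma^d$ are inherited: on a boundary edge the conditions (\ref{eqn:Pi 2}) force $\Pi_h^i\boldsymbol{\psi}_h^i\cdot\boldsymbol{n}$ to equal the edge $L^2$-projection of $\boldsymbol{\psi}_h^i\cdot\boldsymbol{n}$, which vanishes since $\boldsymbol{\psi}_h^s=0$ on $\Gamma^s$ (from $V_h^s\subset V^s$) and $\boldsymbol{\psi}_h^d\cdot\boldsymbol{n}^d=0$ on $\Gamma^d$ (from $V_h^d=\Upsilon_h^d$). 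Hence the substantive content is the global divergence-free condition and the normal continuity across $\Gamma$.

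First I would extract from (\ref{eqn:Pi 1})--(\ref{eqn:Pi 2}), by element-wise integration by parts, the commuting relation $\nabla\cdot\Pi_h^i\boldsymbol{v}=P_h(\nabla\cdot\boldsymbol{v})$ on each $T$, where $P_h$ is the $L^2$-projection onto the piecewise $P_{k-1}$ space: for $q_h\in P_{k-1}(T)$ one writes $(\nabla\cdot\Pi_h^i\boldsymbol{v},q_h)_T=-(\Pi_h^i\boldsymbol{v},\nabla q_h)_T+\langle\Pi_h^i\boldsymbol{v}\cdot\boldsymbol{n},q_h\rangle_{\partial T}$, then uses $\nabla q_h\in[P_{k-2}(T)]^N$ with (\ref{eqn:Pi 1}) and $q_h|_e\in P_{k-1}(e)$ with (\ref{eqn:Pi 2}) to replace $\Pi_h^i\boldsymbol{v}$ by $\boldsymbol{v}$, recovering $(\nabla\cdot\boldsymbol{v},q_h)_T$. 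The same boundary identity shows the trace $\Pi_h^i\boldsymbol{v}\cdot\boldsymbol{n}|_e$ is the edge $L^2$-projection of $\boldsymbol{v}\cdot\boldsymbol{n}$ onto $P_{k-1}(e)$. I would also note that $\Pi_h^d$ is the identity on $V_h^d=\Upsilon_h^d$, since the degrees of freedom are unisolvent on $RT_{k-1}$.

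I would establish the interface continuity \emph{before} the divergence-free property, because the former supplies the mean-value information the latter needs. On each $e\in\mathcal{E}_h(\Gamma)$ the trace property and linearity give $\Pi_h^s\boldsymbol{\psi}_h^s\cdot\boldsymbol{n}^s+\Pi_h^d\boldsymbol{\psi}_h^d\cdot\boldsymbol{n}^d=P_e(\boldsymbol{\psi}_h^s\cdot\boldsymbol{n}^s+\boldsymbol{\psi}_h^d\cdot\boldsymbol{n}^d)$, where $P_e$ is the $L^2$-projection onto $P_{k-1}(e)$ and I used that $\boldsymbol{\psi}_h^d\cdot\boldsymbol{n}^d\in P_{k-1}(e)$ already. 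Since $\Lambda_h$ consists of exactly the piecewise $P_{k-1}$ functions on $\Gamma$, the discrete interface constraint in (\ref{eqn:definition of Vh}) says this quantity is $L^2(e)$-orthogonal to all of $P_{k-1}(e)$, so it vanishes edge by edge. This is the normal continuity across $\Gamma$, and it places $\Pi_h\boldsymbol{\psi}_h$ in $H(\mathrm{div};\Omega)$.

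Finally, for the divergence I would use the commuting relation to set $\chi^i:=\nabla\cdot\Pi_h^i\boldsymbol{\psi}_h^i=P_h(\nabla\cdot\boldsymbol{\psi}_h^i)$, piecewise $P_{k-1}$. The one genuine obstacle is that $Q_h\subset L_0^2(\Omega)$, so $\chi=(\chi^s,\chi^d)$ is an admissible test function only once it is known to have zero total mean; this is precisely where the interface continuity pays off. By the divergence theorem together with the already verified normal-trace conditions on $\Gamma^s$, $\Gamma^d$ and across $\Gamma$, one gets $\int_\Omega\chi=\int_\Gamma(\Pi_h^s\boldsymbol{\psi}_h^s\cdot\boldsymbol{n}^s+\Pi_h^d\boldsymbol{\psi}_h^d\cdot\boldsymbol{n}^d)=0$, so $\chi\in Q_h$ by the compatibility of the pressure and reconstruction spaces. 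Testing the discrete divergence constraint $b(\boldsymbol{\psi}_h,\chi)=0$ and using that $\chi^i$ is the $L^2$-projection of $\nabla\cdot\boldsymbol{\psi}_h^i$ collapses it to $-\|\chi^s\|_s^2-\|\chi^d\|_d^2=0$, forcing $\chi^s=\chi^d=0$, i.e. $\nabla\cdot\Pi_h\boldsymbol{\psi}_h=0$. Assembling the verified conditions gives $\Pi_h\boldsymbol{\psi}_h\in\Upsilon_h\cap\Upsilon$.
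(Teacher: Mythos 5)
Your proof is correct and follows essentially the same route as the paper: the edge conditions (\ref{eqn:Pi 2}) give the normal-trace projections that kill the traces on $\Gamma^s$, $\Gamma^d$ and, via the $\Lambda_h$-orthogonality in (\ref{eqn:definition of Vh}), the jump across $\Gamma$, while the elementwise integration by parts with (\ref{eqn:Pi 1})--(\ref{eqn:Pi 2}) transfers the divergence test to $\boldsymbol{\psi}_h$ and the constraint $b(\boldsymbol{\psi}_h,\cdot)=0$ forces $\nabla\cdot\Pi_h\boldsymbol{\psi}_h=0$. Your repackaging of that computation as the commuting identity $\nabla\cdot\Pi_h^i\boldsymbol{v}=P_h(\nabla\cdot\boldsymbol{v})$, and your explicit check that $\nabla\cdot\Pi_h\boldsymbol{\psi}_h$ has zero mean (so it is an admissible test function in $Q_h$), are just more carefully stated versions of the paper's one-line observations $(\nabla\cdot(\Pi_h\boldsymbol{v}_h),1)=(\Pi_h\boldsymbol{v}_h\cdot\boldsymbol{n},1)_{\partial\Omega}=0$ and the subsequent elementwise calculation.
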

\begin{proof}
For any $\boldsymbol{v}_h\in V_h(g)$, we have $\boldsymbol{v}_h^s=0$ on $\Gamma^s$, and $\boldsymbol{v}_h^d\cdot\boldsymbol{n}^d=0$ on $\Gamma^d$, and $\langle\boldsymbol{v}_h^s\cdot\boldsymbol{n}^s
+\boldsymbol{v}_h^d\cdot\boldsymbol{n}^d,q_h\rangle_e=0$ for any $q_h\in P_{k-1}(e), e\subset\Gamma$. From (\ref{eqn:Pi 2}) and noting that $\boldsymbol{v}_h\cdot\boldsymbol{n}\in P_{k-1}(e), e\subset\partial T$ for any $T\in\mathcal{T}_h$, we get $\Pi_h^s\boldsymbol{v}_h^s\cdot\boldsymbol{n}^s=0$ on $\Gamma^s$, $\Pi_h^d\boldsymbol{v}_h^d\cdot\boldsymbol{n}^d=0$ on $\Gamma^d$, and
$\Pi_h^s\boldsymbol{v}_h^s\cdot\boldsymbol{n}^s
+\Pi_h^d\boldsymbol{v}_h^d\cdot\boldsymbol{n}^d=0$ on $\Gamma$. 

Moreover, from $\nabla\cdot(\Pi_h\boldsymbol{v}_h)_T\in P_{k-1}(T)$ and
$(\nabla\cdot(\Pi_h\boldsymbol{v}_h),1)
=(\Pi_h\boldsymbol{v}_h\cdot\boldsymbol{n},1)_{\partial\Omega}=0$, we have $\nabla\cdot(\Pi_h\boldsymbol{v}_h)\subset Q_h$. And, from the integral by part, (\ref{eqn:Pi 1}), (\ref{eqn:Pi 2}), and $b(\boldsymbol{v}_h,q_h)=0$ for any $q_h\in Q_h$, it holds
\begin{align*}
(\nabla\cdot(\Pi_h\boldsymbol{v}_h),q_h)
=&\sum_{T\in\mathcal{T}_h}(\nabla\cdot(\Pi_h^{i_T}\boldsymbol{v}_h),q_h)_T
=-\sum_{T\in\mathcal{T}_h}(\Pi_h^{i_T}\boldsymbol{v}_h,\nabla q_h)_T+\langle \Pi_h^{i_T}\boldsymbol{v}_h\cdot\boldsymbol{n},q_h\rangle_{\partial T}\\
=&-\sum_{T\in\mathcal{T}_h}(\boldsymbol{v}_h,\nabla q_h)_T+\langle \boldsymbol{v}_h\cdot\boldsymbol{n},q_h\rangle_{\partial T}
=\sum_{T\in\mathcal{T}_h}(\nabla\cdot\boldsymbol{v}_h,q_h)_T=0.
\end{align*}
According to $T\in\Omega^s$ or $\Omega^d$, $i_T$ takes values of $s$ or $d$.The above analysis means $\nabla\cdot(\Pi_h\boldsymbol{v}_h)=0$.
\end{proof}

%The finite element methods used in numerical examples one can use the standard interpolation $\Pi=I_{BDM}$ into the Brezzi-Douglas-Marini space 
%$\Upsilon_h:=BDM_1(\mathcal{T}_h):=[P_1(\mathcal{T}_h)]^N\cap H(div;\Omega)$. This operator
%then has the property
%\begin{align}
%\|\boldsymbol{\psi}-\Pi\boldsymbol{\psi}\|\lesssim h^m\|\boldsymbol{\psi}\|_{m+1}
%\quad \mbox{~for~} m=\{1,2\}\mbox{~and~}\boldsymbol{\psi}\in H^m(\Omega)
%\end{align}
%which can be found in texbook like [3]\textcolor{red}{Find the specific theorem}. 

For the space $V_h(0)$ with norm $\|\cdot\|_X$, we will show three consistency errors caused by the reconstruction operators with dual norm. The first is defined as following
\begin{align}
\label{eqn:consistency error reconstruciton 1}
\|\boldsymbol{\varphi}\circ(1-\Pi_h)\|_{(V_h(0))^*}
=&\sup_{\boldsymbol{\psi}_h\in V_h(0)}\frac{(\boldsymbol{\varphi},(1-\Pi_h)\boldsymbol{\psi}_h)}
{\|\boldsymbol{\psi}_h\|_X},
\end{align}
related to any
$\boldsymbol{\varphi}=(\boldsymbol{\varphi}^s,\boldsymbol{\varphi}^d)\in [L^2(\Omega^s)]^N\times [L^2(\Omega^d)]^N$. Furthermore, if  $\boldsymbol{\varphi}^s\in [H^1(\Omega^s)]^N$, the second consistency error can be shown with the following dual norm
\begin{align}
\label{eqn:consistency error reconstruciton 2}
\|(\boldsymbol{\varphi}^s\cdot\boldsymbol{n}^s)\circ
(1-\Pi_h^s)\|_{(V_h(0))^*}
=\sup_{\boldsymbol{\psi}_h\in V_h(0)}\frac{\langle \boldsymbol{\varphi}^s\cdot\boldsymbol{n}^s,
(1-\Pi_h^s)\boldsymbol{\psi}_h^s\cdot\boldsymbol{n}^s\rangle_{\Gamma}}
{\|\boldsymbol{\psi}_h\|_X}.
\end{align}
Finally, if $\boldsymbol{\varphi}\in V(g)$, the last consistency error will be defined with the dual norm
\begin{align}
\label{eqn:consistency error reconstruciton 3}
\|(1-\Pi_hS_h)\boldsymbol{\varphi}\circ\Pi_h\|_{(V_h(0))^*}
=&\sup_{\boldsymbol{\psi}_h\in V_h(0)}\frac{((1-\Pi_hS_h)\boldsymbol{\varphi},\Pi_h\boldsymbol{\psi}_h)}
{\|\boldsymbol{\psi}_h\|_X}.
\end{align}
\begin{lemma}\label{lem:three consistency errors}
The three consistency errors defined in (\ref{eqn:consistency error reconstruciton 1}),
(\ref{eqn:consistency error reconstruciton 2}), and (\ref{eqn:consistency error reconstruciton 3}) are well-defined, and with the estimations
\begin{align*}
\|\boldsymbol{\varphi}\circ(1-\Pi_h)\|_{(V_h(0))^*}^2
\lesssim &h^2\sum_{T\in\mathcal{T}_h(\Omega^s)}\inf_{\boldsymbol{\varphi}_h^s\in [P_{k-2}(T)]^N}\|\boldsymbol{\varphi}^s-\boldsymbol{\varphi}_h^s\|_T^2,\\
\|(\boldsymbol{\varphi}^s\cdot\boldsymbol{n}^s)\circ
(1-\Pi_h^s)\|_{(V_h(0))^*}^2
\lesssim &h\sum_{e\in\mathcal{E}_h(\Gamma)}\inf_{q_h\in P_{k-1}(e)}
\|\boldsymbol{\varphi}^s\cdot\boldsymbol{n}^s-q_h\|_e^2,\\
\|(1-\Pi_hS_h)\boldsymbol{\varphi}\circ\Pi_h\|_{(V_h(0))^*}^2
\lesssim &\|\boldsymbol{\varphi}-S_h\boldsymbol{\varphi}\|^2
+\|\boldsymbol{\varphi}-\Pi_h\boldsymbol{\varphi}\|^2
+\|(1-\Pi_h)(\boldsymbol{\varphi}-S_h\boldsymbol{\varphi})\|^2.
\end{align*}
\end{lemma}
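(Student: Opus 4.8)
The plan is to establish the three bounds one at a time, since each is a boundedness statement for a linear functional on $V_h(0)$; the well-definedness asserted in the lemma then follows a posteriori from the finiteness of the right-hand sides. Throughout I would exploit the two defining orthogonality relations (\ref{eqn:Pi 1}) and (\ref{eqn:Pi 2}) of the reconstruction operator together with the approximation bound (\ref{eqn:Pi inequality}), and the direct inequality $|\boldsymbol{\psi}_h^s|_{1,s}\le\|\boldsymbol{\psi}_h\|_X$.

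For the first estimate, the key structural observation is that $\Upsilon_h^d=V_h^d$, so $\Pi_h^d$ acts as the identity on the Darcy component of any $\boldsymbol{\psi}_h\in V_h(0)$; hence $(1-\Pi_h)\boldsymbol{\psi}_h$ is supported only in $\Omega^s$ and the numerator collapses to $(\boldsymbol{\varphi}^s,(1-\Pi_h^s)\boldsymbol{\psi}_h^s)_s$, which explains why the stated sum runs only over $\mathcal{T}_h(\Omega^s)$. I would then insert an arbitrary elementwise polynomial $\boldsymbol{\varphi}_h^s\in[P_{k-2}(T)]^N$ and use (\ref{eqn:Pi 1}), which renders $(1-\Pi_h^s)\boldsymbol{\psi}_h^s$ orthogonal to $[P_{k-2}(T)]^N$, to replace $\boldsymbol{\varphi}^s$ by $\boldsymbol{\varphi}^s-\boldsymbol{\varphi}_h^s$ on each $T$. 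Elementwise Cauchy-Schwarz, the bound $\|(1-\Pi_h^s)\boldsymbol{\psi}_h^s\|_{0,T}\lesssim h_T|\boldsymbol{\psi}_h^s|_{1,T}$ from (\ref{eqn:Pi inequality}) with $m=1,\ell=0$, and a discrete Cauchy-Schwarz over elements then extract a factor $h$ and leave $|\boldsymbol{\psi}_h^s|_{1,s}$; dividing by $\|\boldsymbol{\psi}_h\|_X$ and optimizing the (elementwise) choice of $\boldsymbol{\varphi}_h^s$ yields the claim.

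For the second estimate I would work edge by edge on $\Gamma$. Property (\ref{eqn:Pi 2}) makes $(1-\Pi_h^s)\boldsymbol{\psi}_h^s\cdot\boldsymbol{n}^s$ orthogonal to $P_{k-1}(e)$ on each $e\in\mathcal{E}_h(\Gamma)$, so I may subtract an arbitrary $q_h\in P_{k-1}(e)$ from $\boldsymbol{\varphi}^s\cdot\boldsymbol{n}^s$. After Cauchy-Schwarz on $e$, the crucial ingredient is a trace estimate for $\|(1-\Pi_h^s)\boldsymbol{\psi}_h^s\|_e$: combining the scaled trace inequality $\|\cdot\|_e^2\lesssim h_T^{-1}\|\cdot\|_{0,T}^2+h_T|\cdot|_{1,T}^2$ with the two cases $m=1,\ell=0$ and $m=1,\ell=1$ of (\ref{eqn:Pi inequality}) gives $\|(1-\Pi_h^s)\boldsymbol{\psi}_h^s\|_e\lesssim h_T^{1/2}|\boldsymbol{\psi}_h^s|_{1,T}$. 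Summing over $\mathcal{E}_h(\Gamma)$ with a discrete Cauchy-Schwarz pulls out $h^{1/2}$ and $|\boldsymbol{\psi}_h^s|_{1,s}$, and the infimum over $q_h$ closes this estimate.

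The third estimate is the most delicate and is essentially an algebraic splitting followed by an $L^2$-stability fact, which I expect to be the main obstacle. The idea is to verify the identity $(1-\Pi_hS_h)\boldsymbol{\varphi}=(\boldsymbol{\varphi}-S_h\boldsymbol{\varphi})+(1-\Pi_h)\boldsymbol{\varphi}-(1-\Pi_h)(\boldsymbol{\varphi}-S_h\boldsymbol{\varphi})$, obtained directly by expanding $(1-\Pi_h)S_h\boldsymbol{\varphi}$; its three summands match exactly the three terms on the right-hand side. Testing each summand against $\Pi_h\boldsymbol{\psi}_h$ and applying Cauchy-Schwarz reduces everything to controlling $\|\Pi_h\boldsymbol{\psi}_h\|$, and here the essential point is the stability bound $\|\Pi_h\boldsymbol{\psi}_h\|\lesssim\|\boldsymbol{\psi}_h\|_X$. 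I would derive it from $\|\Pi_h\boldsymbol{\psi}_h\|\le\|\boldsymbol{\psi}_h\|+\|(1-\Pi_h)\boldsymbol{\psi}_h\|$, bounding $\|(1-\Pi_h^s)\boldsymbol{\psi}_h^s\|\lesssim h|\boldsymbol{\psi}_h^s|_{1,s}$ by (\ref{eqn:Pi inequality}), $\|\boldsymbol{\psi}_h^s\|\lesssim|\boldsymbol{\psi}_h^s|_{1,s}$ by the Poincar\'e--Friedrichs inequality, and using $\Pi_h^d\boldsymbol{\psi}_h^d=\boldsymbol{\psi}_h^d$ together with $\|\boldsymbol{\psi}_h^d\|_d\le\|\boldsymbol{\psi}_h\|_X$ for the Darcy part. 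Collecting the three contributions and applying $(a+b+c)^2\lesssim a^2+b^2+c^2$ then delivers the stated bound.
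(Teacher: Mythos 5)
Your proposal is correct and follows essentially the same route as the paper's own proof: the same orthogonality arguments via (\ref{eqn:Pi 1}) and (\ref{eqn:Pi 2}) combined with (\ref{eqn:Pi inequality}) and the scaled trace inequality for the first two bounds (including the observation that $\Pi_h^d$ is the identity on $V_h^d$), and for the third bound the identical algebraic splitting of $(1-\Pi_hS_h)\boldsymbol{\varphi}$ together with the stability estimate $\|\Pi_h\boldsymbol{\psi}_h\|\lesssim\|\boldsymbol{\psi}_h\|_X$ obtained from the Poincar\'e--Friedrichs inequality and $\Pi_h^d\boldsymbol{\psi}_h^d=\boldsymbol{\psi}_h^d$. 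No gaps to report.
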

\begin{proof}
We give the boundedness of the linear functionals on the space $V_h(0)$ in (\ref{eqn:consistency error reconstruciton 1}),
(\ref{eqn:consistency error reconstruciton 2}), and (\ref{eqn:consistency error reconstruciton 3}), respectively.

For the first functional $(\boldsymbol{\varphi},(1-\Pi_h)\boldsymbol{\psi}_h)$, 
from the orthogonality property (\ref{eqn:Pi 1}), Cauchy-Schwarz inequality, estimation (\ref{eqn:Pi inequality}), and the definition of the norm $\|\cdot\|_X$ in (\ref{eqn:consistency error subset}),
it holds
\begin{align*}
(\boldsymbol{\varphi},\boldsymbol{\psi}_h-\Pi_h\boldsymbol{\psi}_h)
=&(\boldsymbol{\varphi}^s,\boldsymbol{\psi}_h^s-\Pi_h^s\boldsymbol{\psi}_h^s)_s
+(\boldsymbol{\varphi}^d,\boldsymbol{\psi}_h^d-\Pi_h^d\boldsymbol{\psi}_h^d)_d
=(\boldsymbol{\varphi}^s,\boldsymbol{\psi}_h^s-\Pi_h^s\boldsymbol{\psi}_h^s)_s\\
\lesssim& h\left(\sum_{T\in\mathcal{T}_h(\Omega^s)}\inf_{\boldsymbol{\varphi}_h^s\in [P_{k-2}(T)]^N}\|\boldsymbol{\varphi}^s-\boldsymbol{\varphi}_h^s\|_T^2\right)^{1/2}
\|\nabla\boldsymbol{\psi}_h^s\|_s\\
\leq& h\left(\sum_{T\in\mathcal{T}_h(\Omega^s)}\inf_{\boldsymbol{\varphi}_h^s\in [P_{k-2}(T)]^N}\|\boldsymbol{\varphi}^s-\boldsymbol{\varphi}_h^s\|_T^2\right)^{1/2}
\|\boldsymbol{\psi}_h\|_X.
\end{align*}

For the second functional $\langle \boldsymbol{\varphi}^s\cdot\boldsymbol{n}^s,
(1-\Pi_h^s)\boldsymbol{\psi}_h^s\cdot\boldsymbol{n}^s\rangle_{\Gamma}$, from the orthogonality property (\ref{eqn:Pi 2}), Cauchy-Schwarz inequality, trace inequality, 
estimation (\ref{eqn:Pi inequality}), and the definition of the norm $\|\cdot\|_X$ in (\ref{eqn:consistency error subset}), we have
\begin{align*}
&\langle \boldsymbol{\varphi}^s\cdot\boldsymbol{n}^s,
(1-\Pi_h^s)\boldsymbol{\psi}_h^s\cdot\boldsymbol{n}^s\rangle_{\Gamma}\\
\leq &
\sum_{e\in\mathcal{E}_h(\Gamma)}\inf_{q_h\in P_{k-1}(e)}
\|\boldsymbol{\varphi}^s\cdot\boldsymbol{n}^s-q_h\|_e
\|(1-\Pi_h^s)\boldsymbol{\psi}_h^s\cdot\boldsymbol{n}^s\|_e\\
\lesssim &\left(\sum_{e\in\mathcal{E}_h(\Gamma)}\inf_{q_h\in P_{k-1}(e)}
\|\boldsymbol{\varphi}^s\cdot\boldsymbol{n}^s-q_h\|_e^2\right)^{1/2}\\
&\left(\sum_{T\in\mathcal{T}_h(\Omega^s)}
h_T^{-1}\|(1-\Pi_h^s)\boldsymbol{\psi}_h^s\|_T^2
+h_T\|\nabla(1-\Pi_h^s)\boldsymbol{\psi}_h^s\|_T^2\right)^{1/2}\\
\lesssim &\left(h\sum_{e\in\mathcal{E}_h(\Gamma)}\inf_{q_h\in P_{k-1}(e)}
\|\boldsymbol{\varphi}^s\cdot\boldsymbol{n}^s-q_h\|_e^2\right)^{1/2}
\|\nabla\boldsymbol{\psi}_h^s\|_s\\
\leq &\left(h\sum_{e\in\mathcal{E}_h(\Gamma)}\inf_{q_h\in P_{k-1}(e)}
\|\boldsymbol{\varphi}^s\cdot\boldsymbol{n}^s-q_h\|_e^2\right)^{1/2}
\|\boldsymbol{\psi}_h\|_X.
\end{align*}

Finally, the Poincar\'e-Friedrichs inequality (Lemma B.66 in \cite{Ern2004}) $\|\boldsymbol{\psi}^s\|\leq C_F\|\nabla\boldsymbol{\psi}^s\|$ also implies the estimate
\begin{align*}
\|\Pi_h^s\boldsymbol{\psi}^s\|\leq \|\boldsymbol{\psi}^s-\Pi_h^s\boldsymbol{\psi}^s\|
+\|\boldsymbol{\psi}^s\|\leq (h+C_F)\|\nabla\boldsymbol{\psi}^s\|
\end{align*}
for any $\boldsymbol{\psi}^s\in V^s$. Noting that $\Pi_h^d\boldsymbol{\psi}_h^d=\boldsymbol{\psi}_h^d$ for any $\boldsymbol{\psi}_h^d\in V_h^d$, we have $\|\Pi_h\boldsymbol{\psi}_h\|\lesssim \|\boldsymbol{\psi}_h\|_X$. Then, for the last functional $((1-\Pi_hS_h)\boldsymbol{\varphi},\Pi_h\boldsymbol{\psi}_h)$, the following estimation holds
\begin{align*}
&((1-\Pi_hS_h)\boldsymbol{\varphi},\Pi_h\boldsymbol{\psi}_h)\\
=&((1-S_h)\boldsymbol{\varphi},\Pi_h\boldsymbol{\psi}_h)
+((1-\Pi_h)\boldsymbol{\varphi},\Pi_h\boldsymbol{\psi}_h)
-((1-\Pi_h)(\boldsymbol{\varphi}-S_h\boldsymbol{\varphi}),\Pi_h\boldsymbol{\psi}_h)\\
\lesssim& (\|\boldsymbol{\varphi}-S_h\boldsymbol{\varphi}\|
+\|\boldsymbol{\varphi}-\Pi_h\boldsymbol{\varphi}\|
+\|(1-\Pi_h)(\boldsymbol{\varphi}-S_h\boldsymbol{\varphi})\|)\|\boldsymbol{\psi}_h\|_X.
\end{align*}
Thus, we complete the proof.
\end{proof}

To obtain a fully pressure-robust discretization, in addition to the Stokes-Darcy equations also the cost functional needs to be modified as follows
\begin{align}
\min_{(\boldsymbol{u}_h,\boldsymbol{\eta}_h)\in V_h(g)\times W
} &\frac{1}{2}\|\Pi_h\boldsymbol{u}_h-\boldsymbol{u}^*\|^2+\frac{\alpha}{2}
\|\boldsymbol{\eta}_h\|^2\\
s.t.&\quad a(\boldsymbol{u}_h,\boldsymbol{\psi}_h)
=(\boldsymbol{f}+\boldsymbol{\eta}_h,\Pi_h\boldsymbol{\psi}_h), \quad \forall\boldsymbol{\psi}_h\in V_h(g).
\end{align}
Here, we have referred to a similar approach to the Stokes optimal control problem \cite{merdonPressureRobustnessContextOptimal2023}.
Again, the optimization problem is equivalent to searching for a solution of the reduced optimality system. Hence, we search $(\boldsymbol{u}_h,\boldsymbol{z}_h,p_h,r_h)\in 
V_h\times V_h\times Q_h\times Q_h$ solving 
\begin{align}
\label{eqn:fully pressure-robust discretization}
\begin{aligned}
 a(\boldsymbol{u}_h,\boldsymbol{\psi}_h)+b(\boldsymbol{\psi}_h,p_h)
&=(f-\alpha^{-1/2}\Pi_h\boldsymbol{z}_h,\Pi_h\boldsymbol{\psi}_h), && \forall
\boldsymbol{\psi}_h\in V_h,\\
b(\boldsymbol{\psi}_h,\phi_h)&=(g,\phi_h), && \forall \phi_h\in Q_h\\
 a(\boldsymbol{\psi}_h,\boldsymbol{z}_h)+b(\boldsymbol{\psi}_h,r_h)
&=\alpha^{-1/2}(\Pi_h\boldsymbol{u}_h-\boldsymbol{u}^d,\Pi_h\boldsymbol{\psi}_h), && \forall
\boldsymbol{\psi}_h\in V_h,\\
b(\boldsymbol{z}_h,\phi_h)&=(g,\phi_h),&& \forall \phi_h\in Q_h.
\end{aligned}
\end{align}

\begin{theorem}\label{the:pressure-robust}
If the exact solutions of (\ref{eqn:PDE problem}) have the regularity
$\boldsymbol{u}^s,\boldsymbol{z}^s\in [H^2(\Omega^s)]^N$, and $p^i,r^i\in H^1(\Omega^i), i=s,d$,
the pressure-robust estimate holds
\begin{align}
\label{eqn:error estmate pressure robust}
\begin{aligned}
\|(S_h\boldsymbol{u}-\boldsymbol{u}_h,S_h\boldsymbol{z}-\boldsymbol{z}_h)\|_X^2
\lesssim &\|(2\nabla\cdot D(\boldsymbol{u}^s),K^{-1}\boldsymbol{u}^d)\circ(1-\Pi_h)\|_{(V_h(0))^*}^2\\
+&\|(2\nabla\cdot D(\boldsymbol{z}^s),K^{-1}\boldsymbol{z}^d)\circ(1-\Pi_h)\|_{(V_h(0))^*}^2\\
+&\|2 D(\boldsymbol{u}^s)\boldsymbol{n}^s\cdot\boldsymbol{n}^s\circ
(1-\Pi_h)\|_{(V_h(0))^*}^2\\
+&\|2 D(\boldsymbol{z}^s)\boldsymbol{n}^s\cdot\boldsymbol{n}^s\circ
(1-\Pi_h)\|_{(V_h(0))^*}^2\\
+&\|(1-\Pi_hS_h)\boldsymbol{u}\circ\Pi_h\|_{(V_h(0))^*}^2\\
+&\|(1-\Pi_hS_h)\boldsymbol{z}\circ\Pi_h\|_{(V_h(0))^*}^2,
\end{aligned}
\end{align}
for the solution $(\boldsymbol{u}_h,\boldsymbol{z}_h)$ of (\ref{eqn:fully pressure-robust discretization}) and the discrete approximation $(S_h\boldsymbol{u},S_h\boldsymbol{z})$ defined in (\ref{eqn:Sh definition}) of the 
exact solutions of (\ref{eqn:PDE problem}). More generally, if the exact solutions of have the regularity
$(\boldsymbol{u}^s,\boldsymbol{u}^d),(\boldsymbol{z}^s,\boldsymbol{z}^d)\in [H^k(\Omega^s)]^N\times [H^{k-1}(\Omega^d)]^N$, $\nabla\cdot \boldsymbol{u}^d, \nabla\cdot \boldsymbol{z}^d\in H^{k-1}(\Omega^d)$, and $p^i,r^i\in H^{k-1}(\Omega^i), i=s,d$ for any $k\geq 2$,
the pressure-robust estimate holds
%\begin{align*}
%\|(2\nabla\cdot D(\boldsymbol{\psi}^s),K^{-1}\boldsymbol{\psi}^d)\circ(1-\Pi_h)\|_{(V_h(g))^*}^2
%\lesssim &h^{2k}\|\boldsymbol{\psi}^s\|_{k+1,s}^2,\\
%\|2 D(\boldsymbol{\psi}^s)\boldsymbol{n}^s\cdot\boldsymbol{n}^s\circ
%(1-\Pi_h)\|_{(V_h(g))^*}^2
%\lesssim & h^{2k}\|\boldsymbol{\psi}^s\|_{k+1,s}^2,\\
%\|(1-\Pi_hS_h)\boldsymbol{\psi}\circ\Pi_h\|_{(V_h(g))^*}^2
%\lesssim & h^{2(k+1)}(\|\boldsymbol{\psi}^s\|_{k+1,s}^2
%+\|\boldsymbol{\psi}^d\|_{k+1,d}^2),
%\end{align*}
%for $\boldsymbol{\psi}=\boldsymbol{u}$ or $\boldsymbol{z}$. Furthermore, we get
\begin{align}\label{eqn:error estiamte regular for pressure robust}
\begin{aligned}
\|(\boldsymbol{u}-\boldsymbol{u}_h,\boldsymbol{z}-\boldsymbol{z}_h)\|_X^2
\lesssim h^{2(k-1)}(&\|\boldsymbol{u}^s\|_{k,s}^2+\|\boldsymbol{u}^d\|_{k-1,d}^2+\|\nabla\cdot\boldsymbol{u}^d\|_{k-1,d}^2\\
+&\|\boldsymbol{z}^s\|_{k,s}^2+\|\boldsymbol{z}^d\|_{k-1,d}^2+\|\nabla\cdot\boldsymbol{z}^d\|_{k-1,d}^2).
\end{aligned}
\end{align}
\end{theorem}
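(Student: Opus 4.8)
The plan is to follow the architecture of the proof of Theorem~\ref{the:classical estimate}, but to use the reconstruction operator to annihilate \emph{all} pressure contributions and, crucially, to exploit the Galerkin orthogonality built into $S_h$ so that the best-approximation terms appearing in (\ref{eqn:error estimate pressure depend}) do not reappear. Write $\boldsymbol{e}_u=S_h\boldsymbol{u}-\boldsymbol{u}_h$ and $\boldsymbol{e}_z=S_h\boldsymbol{z}-\boldsymbol{z}_h$. The second and fourth lines of (\ref{eqn:fully pressure-robust discretization}) force $\boldsymbol{u}_h,\boldsymbol{z}_h\in V_h(g)$, and $S_h\boldsymbol{u},S_h\boldsymbol{z}\in V_h(g)$ by (\ref{eqn:Sh definition}), so $\boldsymbol{e}_u,\boldsymbol{e}_z\in V_h(0)$; testing $b(\boldsymbol{e}_u,\cdot)=0$ with $\phi_h^d=\nabla\cdot\boldsymbol{e}_u^d\in Q_h^d$ and $\nabla\cdot V_h^d=Q_h^d$ gives $\nabla\cdot\boldsymbol{e}_u^d=\nabla\cdot\boldsymbol{e}_z^d=0$, so the coercivity of $a(\cdot,\cdot)$ (Lemma~3.1 in \cite{Layton2002}) yields $\|(\boldsymbol{e}_u,\boldsymbol{e}_z)\|_X^2\lesssim a(\boldsymbol{e}_u,\boldsymbol{e}_u)+a(\boldsymbol{e}_z,\boldsymbol{e}_z)$. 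From (\ref{eqn:Sh definition}) one has $a(S_h\boldsymbol{u}-\boldsymbol{u},\boldsymbol{e}_u)=-b(\boldsymbol{e}_u,\delta_h)=0$ and, by symmetry of $a$, $a(\boldsymbol{e}_z,S_h\boldsymbol{z}-\boldsymbol{z})=0$; this is precisely the step that deletes the approximation terms.

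The heart of the argument is a pressure-robust consistency identity. Since $\boldsymbol{u}^s\in[H^2(\Omega^s)]^N$ and $p^i\in H^1$, the pair $(\boldsymbol{u},p)$ satisfies the strong Stokes--Darcy system with source $\boldsymbol{f}-\alpha^{-1/2}\boldsymbol{z}$ and the interface conditions (\ref{eqn:state boundary 1})--(\ref{eqn:state boundary 3}). I would test the momentum equations against $\Pi_h\boldsymbol{\psi}_h$ (matching the discrete right-hand side) for $\boldsymbol{\psi}_h\in V_h(0)$ and integrate the pressure gradients by parts: by Lemma~\ref{lem:Pi}, $\Pi_h\boldsymbol{\psi}_h$ is exactly divergence-free with vanishing normal trace on $\Gamma^s,\Gamma^d$, so only $\langle p^s,\Pi_h^s\boldsymbol{\psi}_h^s\cdot\boldsymbol{n}^s\rangle_\Gamma+\langle p^d,\Pi_h^d\boldsymbol{\psi}_h^d\cdot\boldsymbol{n}^d\rangle_\Gamma$ survives. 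Substituting the balance of forces (\ref{eqn:state boundary 2}) $p^s=p^d+2\mu D(\boldsymbol{u}^s)\boldsymbol{n}^s\cdot\boldsymbol{n}^s$ and the exact interface normal continuity $\Pi_h^s\boldsymbol{\psi}_h^s\cdot\boldsymbol{n}^s+\Pi_h^d\boldsymbol{\psi}_h^d\cdot\boldsymbol{n}^d=0$ of Lemma~\ref{lem:Pi}, the $p^d$-contribution vanishes and only a normal-stress term remains. Comparing with the integration by parts of $a(\boldsymbol{u},\boldsymbol{\psi}_h)$ done exactly as in (\ref{eqn:the first summand})--(\ref{eqn:the third summand}) (the Darcy term drops because $\Pi_h^d$ is the identity on $V_h^d$), I obtain
\begin{align*}
a(\boldsymbol{u},\boldsymbol{\psi}_h)
=&(\boldsymbol{f}-\alpha^{-1/2}\boldsymbol{z},\Pi_h\boldsymbol{\psi}_h)
-(2\mu\nabla\cdot D(\boldsymbol{u}^s),(1-\Pi_h^s)\boldsymbol{\psi}_h^s)_s\\
&+\langle 2\mu D(\boldsymbol{u}^s)\boldsymbol{n}^s\cdot\boldsymbol{n}^s,(1-\Pi_h^s)\boldsymbol{\psi}_h^s\cdot\boldsymbol{n}^s\rangle_\Gamma.
\end{align*}
The same computation, applied to the adjoint (which solves the analogous system with source $\alpha^{-1/2}(\boldsymbol{u}-\boldsymbol{u}^*)$ and the same interface conditions, using symmetry of $a$), gives the companion identity with $\boldsymbol{z}$ in place of $\boldsymbol{u}$.

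Next I would subtract the discrete equations. On $V_h(0)$ one has $b(\boldsymbol{e}_u,p_h)=b(\boldsymbol{e}_z,r_h)=0$, so (\ref{eqn:fully pressure-robust discretization}) gives $a(\boldsymbol{u}_h,\boldsymbol{\psi}_h)=(\boldsymbol{f}-\alpha^{-1/2}\Pi_h\boldsymbol{z}_h,\Pi_h\boldsymbol{\psi}_h)$ and $a(\boldsymbol{\psi}_h,\boldsymbol{z}_h)=\alpha^{-1/2}(\Pi_h\boldsymbol{u}_h-\boldsymbol{u}^*,\Pi_h\boldsymbol{\psi}_h)$. Inserting the two consistency identities, splitting $\boldsymbol{z}-\Pi_h\boldsymbol{z}_h=(1-\Pi_hS_h)\boldsymbol{z}+\Pi_h\boldsymbol{e}_z$ and $\boldsymbol{u}-\Pi_h\boldsymbol{u}_h=(1-\Pi_hS_h)\boldsymbol{u}+\Pi_h\boldsymbol{e}_u$, and adding $a(\boldsymbol{e}_u,\boldsymbol{e}_u)+a(\boldsymbol{e}_z,\boldsymbol{e}_z)$, the symmetric cross terms $\mp\alpha^{-1/2}(\Pi_h\boldsymbol{e}_u,\Pi_h\boldsymbol{e}_z)$ cancel. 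What remains are the four $(1-\Pi_h^s)$-functionals (two each from the primal and adjoint identities) together with the two $(1-\Pi_hS_h)$-terms; bounding each by the dual norms (\ref{eqn:consistency error reconstruciton 1})--(\ref{eqn:consistency error reconstruciton 3}) and dividing by $\|(\boldsymbol{e}_u,\boldsymbol{e}_z)\|_X$ yields (\ref{eqn:error estmate pressure robust}). For the rates (\ref{eqn:error estiamte regular for pressure robust}) I would invoke Lemma~\ref{lem:three consistency errors}: the $\nabla\cdot D$- and $D\boldsymbol{n}\cdot\boldsymbol{n}$-functionals are controlled by $h^{2(k-1)}\|\boldsymbol{u}^s\|_{k,s}^2$ (resp.\ $\|\boldsymbol{z}^s\|_{k,s}^2$) via (\ref{eqn:Pi inequality}) and a local trace argument identical to (\ref{eqn:cosistency error p}), while the $(1-\Pi_hS_h)$-terms are bounded by the $L^2$ errors $\|\boldsymbol{v}-S_h\boldsymbol{v}\|$ and $\|\boldsymbol{v}-\Pi_h\boldsymbol{v}\|$, all of order $h^{k-1}$ by (\ref{eqn:error u and Sh u}), (\ref{eqn:approximation error u and z}) and (\ref{eqn:Pi inequality}); a final triangle inequality with $\|(\boldsymbol{u}-S_h\boldsymbol{u},\boldsymbol{z}-S_h\boldsymbol{z})\|_X$ closes the estimate, and no pressure norm ever enters.

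The main obstacle will be the pressure-elimination identity: one must pair $a(\boldsymbol{u},\cdot)$ with $\boldsymbol{\psi}_h$ but the discrete source with $\Pi_h\boldsymbol{\psi}_h$, and verify that the divergence-free property, the vanishing normal traces on $\Gamma^s,\Gamma^d$, and the interface normal continuity of Lemma~\ref{lem:Pi}, combined with the balance of forces (\ref{eqn:state boundary 2}), together annihilate both $p^s$ and $p^d$ while leaving exactly the term $\langle 2\mu D(\boldsymbol{u}^s)\boldsymbol{n}^s\cdot\boldsymbol{n}^s,(1-\Pi_h^s)\boldsymbol{\psi}_h^s\cdot\boldsymbol{n}^s\rangle_\Gamma$. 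A secondary subtlety is the regularity-to-rate bookkeeping for the boundary functional (\ref{eqn:consistency error reconstruciton 2}): the extra half power of $h$ in Lemma~\ref{lem:three consistency errors} must be absorbed by the scaled trace inequality, exactly as in (\ref{eqn:cosistency error p}), to recover the full order $h^{k-1}$.
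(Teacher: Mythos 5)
Your proposal is correct and follows essentially the same route as the paper's own proof: coercivity of $a(\cdot,\cdot)$ combined with the Galerkin orthogonality of $S_h$ on $V_h(0)$, the pressure-elimination identity obtained from the strong equations, Lemma~\ref{lem:Pi}, and the balance of normal forces (\ref{eqn:state boundary 2}) (matching the paper's (\ref{eqn:Shu-uh})--(\ref{eqn:pressure related terms})), cancellation of the symmetric $\alpha^{-1/2}$ cross terms after the $(1-\Pi_hS_h)$ splitting (the paper's $I_2+I_5$), and the convergence rates via Lemma~\ref{lem:three consistency errors} with the scaled trace argument. The only cosmetic difference is that you discard the Darcy term at the identity level by noting $\Pi_h^d$ is the identity on $V_h^d$, whereas the paper retains it and lets it vanish inside the dual norm of Lemma~\ref{lem:three consistency errors}; the arguments are otherwise the same.
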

\begin{proof}

Since $\boldsymbol{u}^s\in [H^2(\Omega^s)]^N$ and $p^i\in H^1(\Omega^i), i=s,d$, the solutions $(\boldsymbol{u},p)$ of (\ref{eqn:PDE problem}) satisfy (\ref{eqn:state Stokes})$\sim$(\ref{eqn:state boundary 3}) by replacing $\boldsymbol{f}$ with 
$\boldsymbol{f}-\alpha^{-1/2}\boldsymbol{\eta}$. From the definition of the norm $\|\cdot\|_X$ in (\ref{eqn:X norm for u and z}), the property of the projector $S_h$ in (\ref{eqn:Sh definition}), the integration by parts, (\ref{eqn:state Stokes}), (\ref{eqn:state Darcy}), 
the boundary condition (\ref{eqn:state boundary 3}), it reveals
\begin{align}\label{eqn:Shu-uh}
\begin{aligned}
&\|S_h\boldsymbol{u}-\boldsymbol{u}_h\|_X^2\\
\lesssim&a(\boldsymbol{u}-\boldsymbol{u}_h,S_h\boldsymbol{u}-\boldsymbol{u}_h)\\
=&-2\mu(\nabla\cdot D(\boldsymbol{u}^s),(S_h\boldsymbol{u}-\boldsymbol{u}_h)^s)_s
+\mu(K^{-1}\boldsymbol{u}^d,(S_h\boldsymbol{u}-\boldsymbol{u}_h)^d)_d\\
&+2\mu\langle D(\boldsymbol{u}^s)\boldsymbol{n}^s\cdot\boldsymbol{n}^s,
(S_h\boldsymbol{u}-\boldsymbol{u}_h)^s\cdot\boldsymbol{n}^s\rangle_{\Gamma}
-(\boldsymbol{f}-\alpha^{-1/2}\Pi_h\boldsymbol{z}_h,
\Pi_h(S_h\boldsymbol{u}-\boldsymbol{u}_h))\\
=&-2\mu(\nabla\cdot D(\boldsymbol{u}^s),(1-\Pi_h^s)(S_h\boldsymbol{u}-\boldsymbol{u}_h)^s)_s
+\mu(K^{-1}\boldsymbol{u}^d,(1-\Pi_h^d)(S_h\boldsymbol{u}-\boldsymbol{u}_h)^d)_d\\
&
-\alpha^{-1/2}(\boldsymbol{z}-\Pi_h\boldsymbol{z}_h,
\Pi_h(S_h\boldsymbol{u}-\boldsymbol{u}_h))
+2\mu\langle D(\boldsymbol{u}^s)\boldsymbol{n}^s\cdot\boldsymbol{n}^s,
(S_h\boldsymbol{u}-\boldsymbol{u}_h)^s\cdot\boldsymbol{n}^s\rangle_{\Gamma}\\
&-(\nabla p^s,\Pi_h^s(S_h\boldsymbol{u}-\boldsymbol{u}_h)^s)_s
-(\nabla p^d,\Pi_h^d(S_h\boldsymbol{u}-\boldsymbol{u}_h)^d)_d.
%=&I_1+I_2+I_3+J_4+J_5+J_6.
\end{aligned}
\end{align}
%For the last three terms, 
%using the integration by parts, the boundary condition (\ref{eqn:state boundary 2}), and Lemma~\ref{lem:Pi}, we have
%\begin{align*}
%\begin{aligned}
%&2\mu\langle D(\boldsymbol{u}^s)\boldsymbol{n}^s\cdot\boldsymbol{n}^s,
%(S_h\boldsymbol{u}-\boldsymbol{u}_h)^s\cdot\boldsymbol{n}^s\rangle_{\Gamma}
%-(\nabla p^s,\Pi_h^s(S_h\boldsymbol{u}-\boldsymbol{u}_h)^s)_s\\
%&\hspace{18em}-(\nabla p^d,\Pi_h^d(S_h\boldsymbol{u}-\boldsymbol{u}_h)^d)_d\\
%=&2\mu\langle D(\boldsymbol{u}^s)\boldsymbol{n}^s\cdot\boldsymbol{n}^s,
%(S_h\boldsymbol{u}-\boldsymbol{u}_h)^s\cdot\boldsymbol{n}^s\rangle_{\Gamma}-\langle p^s,\Pi_h^s(S_h\boldsymbol{u}-\boldsymbol{u}_h)^s\cdot\boldsymbol{n}^s\rangle_{\Gamma}\\
%&\hspace{18em}-\langle p^d,\Pi_h^d(S_h\boldsymbol{u}-\boldsymbol{u}_h)^d\cdot\boldsymbol{n}^d\rangle_{\Gamma}\\
%=&2\mu\langle D(\boldsymbol{u}^s)\boldsymbol{n}^s\cdot\boldsymbol{n}^s,
%(1-\Pi_h^s)(S_h\boldsymbol{u}-\boldsymbol{u}_h)^s\cdot\boldsymbol{n}^s\rangle_{\Gamma}\\
%&\hspace{12em}-\langle p^d,\Pi_h^s(S_h\boldsymbol{u}
%-\boldsymbol{u}_h)^s\cdot\boldsymbol{n}^s
%+\Pi_h^d(S_h\boldsymbol{u}-\boldsymbol{u}_h)^d\cdot\boldsymbol{n}^d\rangle_{\Gamma}\\
%=&2\mu\langle D(\boldsymbol{u}^s)\boldsymbol{n}^s\cdot\boldsymbol{n}^s,
%(1-\Pi_h^s)(S_h\boldsymbol{u}-\boldsymbol{u}_h)^s\cdot\boldsymbol{n}^s\rangle_{\Gamma}.
%\end{aligned}
%\end{align*}
For the last two terms in (\ref{eqn:Shu-uh}), 
using the integration by parts, the boundary condition (\ref{eqn:state boundary 2}), and Lemma~\ref{lem:Pi}, we have
\begin{align}\label{eqn:pressure related terms}
\begin{aligned}
&-(\nabla p^s,\Pi_h^s(S_h\boldsymbol{u}-\boldsymbol{u}_h)^s)_s
-(\nabla p^d,\Pi_h^d(S_h\boldsymbol{u}-\boldsymbol{u}_h)^d)_d\\
=& (p^s,\nabla\cdot\Pi_h^s(S_h\boldsymbol{u}-\boldsymbol{u}_h)^s)_s
+(p^d,\nabla\cdot\Pi_h^d(S_h\boldsymbol{u}-\boldsymbol{u}_h)^d)_d\\
&-\langle p^s,\Pi_h^s(S_h\boldsymbol{u}-\boldsymbol{u}_h)^s\cdot\boldsymbol{n}^s\rangle_{\Gamma^s}
-\langle p^d,\Pi_h^d(S_h\boldsymbol{u}-\boldsymbol{u}_h)^d\cdot\boldsymbol{n}^d\rangle_{\Gamma^d}\\
&-\langle p^s,\Pi_h^s(S_h\boldsymbol{u}-\boldsymbol{u}_h)^s\cdot\boldsymbol{n}^s\rangle_{\Gamma}
-\langle p^d,\Pi_h^d(S_h\boldsymbol{u}-\boldsymbol{u}_h)^d\cdot\boldsymbol{n}^d\rangle_{\Gamma}\\
=&-\langle p^s,\Pi_h^s(S_h\boldsymbol{u}-\boldsymbol{u}_h)^s\cdot\boldsymbol{n}^s\rangle_{\Gamma}
-\langle p^d,\Pi_h^d(S_h\boldsymbol{u}-\boldsymbol{u}_h)^d\cdot\boldsymbol{n}^d\rangle_{\Gamma}\\
=&-2\mu\langle D(\boldsymbol{u}^s)\boldsymbol{n}^s\cdot\boldsymbol{n}^s,
\Pi_h^s(S_h\boldsymbol{u}-\boldsymbol{u}_h)^s\cdot\boldsymbol{n}^s\rangle_{\Gamma}\\
&-\langle p^d,\Pi_h^s(S_h\boldsymbol{u}
-\boldsymbol{u}_h)^s\cdot\boldsymbol{n}^s
+\Pi_h^d(S_h\boldsymbol{u}-\boldsymbol{u}_h)^d\cdot\boldsymbol{n}^d\rangle_{\Gamma}\\
=&-2\mu\langle D(\boldsymbol{u}^s)\boldsymbol{n}^s\cdot\boldsymbol{n}^s,
\Pi_h^s(S_h\boldsymbol{u}-\boldsymbol{u}_h)^s\cdot\boldsymbol{n}^s\rangle_{\Gamma}.
\end{aligned}
\end{align}
Combining (\ref{eqn:Shu-uh}) and (\ref{eqn:pressure related terms}), we have
\begin{align}
\label{eqn:Su uh X robust}
\begin{aligned}
\|S_h\boldsymbol{u}-\boldsymbol{u}_h\|_X^2\lesssim
%=&I_1+I_2+I_3+I_4+I_5.\\
&((-2\mu\nabla\cdot D(\boldsymbol{u}^s),K^{-1}\boldsymbol{u}^d),
(1-\Pi_h)(S_h\boldsymbol{u}-\boldsymbol{u}_h))\\
&-\alpha^{-1/2}(\boldsymbol{z}-\Pi_h\boldsymbol{z}_h,
\Pi_h(S_h\boldsymbol{u}-\boldsymbol{u}_h))\\
&+2\mu\langle D(\boldsymbol{u}^s)\boldsymbol{n}^s\cdot\boldsymbol{n}^s,
(1-\Pi_h^s)(S_h\boldsymbol{u}-\boldsymbol{u}_h)^s\cdot\boldsymbol{n}^s\rangle_{\Gamma}.\\
=&I_1+I_2+I_3
\end{aligned}
\end{align}

Analogously, one obtains
\begin{align}
\label{eqn:Sz zh X robust}
\begin{aligned}
\|S_h\boldsymbol{z}-\boldsymbol{z}_h\|_X^2
%=&I_1+I_2+I_3+I_4+I_5.\\
\lesssim&((-2\mu\nabla\cdot D(\boldsymbol{z}^s),K^{-1}\boldsymbol{z}^d),
(1-\Pi_h)(S_h\boldsymbol{z}-\boldsymbol{z}_h))\\
&+\alpha^{-1/2}(\boldsymbol{u}-\Pi_h\boldsymbol{u}_h,
\Pi_h(S_h\boldsymbol{z}-\boldsymbol{z}_h))\\
&+2\mu\langle D(\boldsymbol{z}^s)\boldsymbol{n}^s\cdot\boldsymbol{n}^s,
(1-\Pi_h^s)(S_h\boldsymbol{z}-\boldsymbol{z}_h)^s\cdot\boldsymbol{n}^s\rangle_{\Gamma}\\
=&I_4+I_5+I_6
\end{aligned}
\end{align}
Next, noting $S_h\boldsymbol{u}-\boldsymbol{u}_h, S_h\boldsymbol{z}-\boldsymbol{z}_h\in V_h(0)$, we will estimate the six terms in (\ref{eqn:Su uh X robust}) and (\ref{eqn:Sz zh X robust}). For the terms $I_1,I_4,I_3$, and $I_6$, we have
\begin{align*}
I_1\leq& 
\|(2\mu\nabla\cdot D(\boldsymbol{u}^s),K^{-1}\boldsymbol{u}^d)\circ(1-\Pi_h)\|_{(V_h(0))^*}
\|S_h\boldsymbol{u}-\boldsymbol{u}_h\|_X\\
I_4\leq&
\|(2\mu\nabla\cdot D(\boldsymbol{z}^s),K^{-1}\boldsymbol{z}^d)\circ(1-\Pi_h)\|_{(V_h(0))^*}
\|S_h\boldsymbol{z}-\boldsymbol{z}_h\|_X\\
I_3\leq &\|2\mu D(\boldsymbol{u}^s)\boldsymbol{n}^s\cdot\boldsymbol{n}^s\circ
(1-\Pi_h)\|_{(V_h(0))^*}
\|S_h\boldsymbol{u}-\boldsymbol{u}_h\|_X\\
I_6\leq& \|2\mu D(\boldsymbol{z}^s)\boldsymbol{n}^s\cdot\boldsymbol{n}^s\circ
(1-\Pi_h)\|_{(V_h(0))^*}
\|S_h\boldsymbol{z}-\boldsymbol{z}_h\|_X.
\end{align*}
Then, for the term $I_2+I_4$, it can be obtained
\begin{align*}
&I_2+I_5\\
=&-\alpha^{-1/2}(\boldsymbol{z}-\Pi_hS_h\boldsymbol{z},
\Pi_h(S_h\boldsymbol{u}-\boldsymbol{u}_h))
-\alpha^{-1/2}(\Pi_h(S_h\boldsymbol{z}-\boldsymbol{z}_h),
\Pi_h(S_h\boldsymbol{u}-\boldsymbol{u}_h))\\
&+\alpha^{-1/2}(\boldsymbol{u}-\Pi_hS_h\boldsymbol{u},
\Pi_h(S_h\boldsymbol{z}-\boldsymbol{z}_h))
+\alpha^{-1/2}(\Pi_h(S_h\boldsymbol{u}-\boldsymbol{u}_h),
\Pi_h(S_h\boldsymbol{z}-\boldsymbol{z}_h))\\
\leq& \alpha^{-1/2}(\|(1-\Pi_hS_h)\boldsymbol{z}\circ\Pi_h\|_{(V_h(0))^*}
\|S_h\boldsymbol{u}-\boldsymbol{u}_h\|_X\\
&+\|(1-\Pi_hS_h)\boldsymbol{u}\circ\Pi_h\|_{(V_h(0))^*}
\|S_h\boldsymbol{z}-\boldsymbol{z}_h\|_X).
\end{align*}
The estimates of $I_1\sim I_6$ yield (\ref{eqn:error estmate pressure robust}).

Next, for $\boldsymbol{v}=\boldsymbol{u}$ or $\boldsymbol{z}$, we will prove (\ref{eqn:error estiamte regular for pressure robust}) under the regularity $(\boldsymbol{v}^s,\boldsymbol{v}^d)\in [H^k(\Omega^s)]^N\times [H^{k-1}(\Omega^d)]^N$ and $\nabla\cdot \boldsymbol{v}^d\in H^{k-1}(\Omega^d)$ for any $k\geq 2$ .

Firstly, for the first two terms in (\ref{eqn:error estmate pressure robust}), from Lemma~\ref{lem:three consistency errors} and projection property, we have
\begin{align}\label{eqn:first two term estimate}
\begin{aligned}
&\|(2\nabla\cdot D(\boldsymbol{v}^s),K^{-1}\boldsymbol{v}^d)\circ(1-\Pi_h)\|_{(V_h(0))^*}^2\\
\lesssim &h^2\sum_{T\in\mathcal{T}_h(\Omega^s)}\inf_{\boldsymbol{\varphi}_h^s\in [P_{k-2}(T)]^N}\|\nabla\cdot D(\boldsymbol{v}^s)-\boldsymbol{\varphi}_h^s\|_T^2\\
\lesssim &h^{2(k-1)}\|\boldsymbol{v}^s\|_{k,s}^2.
\end{aligned}
\end{align}

For the third and fourth terms in (\ref{eqn:error estmate pressure robust}), let $\boldsymbol{\Psi}_h^s$ to be the local $L^2$ projection of $D(\boldsymbol{v}^s)$ into $\{\boldsymbol{\Phi}_h\in [L^2(\Omega^s)]^{N\times N}~|~\boldsymbol{\Phi}_{h|T}\in [P_{k-1}(T)]^{N\times N},~T\in \mathcal{T}_h(\Omega^s)\}$. From Lemma~\ref{lem:three consistency errors}, trace inequality, and projection property, it is obtained
\begin{align}\label{eqn:third and fourth term estimate}
\begin{aligned}
&\|2 D(\boldsymbol{v}^s)\boldsymbol{n}^s\cdot\boldsymbol{n}^s\circ
(1-\Pi_h)\|_{(V_h(0))^*}^2\\
\lesssim &h\sum_{e\in\mathcal{E}_h(\Gamma)}\inf_{q_h\in P_{k-1}(e)}
\|D(\boldsymbol{v}^s)\boldsymbol{n}^s\cdot\boldsymbol{n}^s-q_h\|_e^2\\
\lesssim &h\sum_{e\in\mathcal{E}_h(\Gamma)}
\|D(\boldsymbol{v}^s)\boldsymbol{n}^s\cdot\boldsymbol{n}^s-\boldsymbol{\Psi}_h^s\boldsymbol{n}^s\cdot\boldsymbol{n}^s\|_e^2\\
\lesssim &h\sum_{T\in\mathcal{T}_h(\Gamma)}
h_T^{-1}\|D(\boldsymbol{v}^s)-\boldsymbol{\Psi}_h^s\|_T^2+h_T|D(\boldsymbol{v}^s)-\boldsymbol{\Psi}_h^s|_{1,T}^2\\
\lesssim &h^{2(k-1)}\|\boldsymbol{v}^s\|_{k,s}^2,
\end{aligned}
\end{align}
where $\mathcal{T}_h(\Gamma)$ is defined in ($\ref{eqn:psi_s estimate}$). 

For the last two terms in (\ref{eqn:error estmate pressure robust}), 
from Lemma~\ref{lem:three consistency errors}, 
the following estimate holds
\begin{align}\label{eqn:last two term}
\|(1-\Pi_hS_h)\boldsymbol{v}\circ\Pi_h\|_{(V_h(0))^*}^2
\lesssim \|\boldsymbol{v}-S_h\boldsymbol{v}\|^2
+\|\boldsymbol{v}-\Pi_h\boldsymbol{v}\|^2
+\|(1-\Pi_h)(\boldsymbol{v}-S_h\boldsymbol{v})\|^2.
\end{align}
From the Poincar\'e-Friedrichs inequality (Lemma B.66 in \cite{Ern2004}) and (\ref{eqn:approximation error u and z}), we get
\begin{align}\label{eqn:estiate Sh}
\|\boldsymbol{v}-S_h\boldsymbol{v}\|^2\lesssim \|\boldsymbol{v}-S_h\boldsymbol{v}\|_X^2\lesssim & h^{2(k-1)}(\|\boldsymbol{v}^s\|_{k,s}^2+\|\boldsymbol{v}^d\|_{k-1,d}^2+\|\nabla\cdot \boldsymbol{v}^d\|_{k-1,d}^2).
\end{align}
Using inequality (\ref{eqn:Pi inequality}) in $\Omega^s$ and $\Omega^d$, respectively, it is clear that
\begin{align}\label{eqn:estiate Pih}
\|\boldsymbol{v}-\Pi_h\boldsymbol{v}\|^2\lesssim & \|\boldsymbol{v}^s-\Pi_h^s\boldsymbol{v}^s\|_{0,s}^2+\|\boldsymbol{v}^d-\Pi_h^d\boldsymbol{v}^d\|_{0,d}^2
\lesssim h^{2k}\|\boldsymbol{v}^s\|_{k,s}^2+h^{2(k-1)}\|\boldsymbol{v}^d\|_{k-1,d}^2.
\end{align}
Noting $\Pi_h^d(S_h\boldsymbol{v})^d=(S_h\boldsymbol{v})^d$ and combining (\ref{eqn:estiate Sh}) and (\ref{eqn:estiate Pih}), it holds
\begin{align}\label{eqn:estiate Sh and Phi}
\begin{aligned}
\|(1-\Pi_h)(\boldsymbol{v}-S_h\boldsymbol{v})\|^2\lesssim &\|(1-\Pi_h^s)(\boldsymbol{v}-S_h\boldsymbol{v})^s\|_s^2+\|(1-\Pi_h^d)(\boldsymbol{v}-S_h\boldsymbol{v})^d\|_d^2\\
\lesssim &\|(1-\Pi_h^s)(\boldsymbol{v}-S_h\boldsymbol{v})^s\|_s^2+\|(1-\Pi_h^d)\boldsymbol{v}^d\|_d^2\\
\lesssim &h^{2}\|(\boldsymbol{v}-S_h\boldsymbol{v})^s\|_{1,s}^2+h^{2(k-1)}\|\boldsymbol{v}^d\|_{k-1,d}^2\\
\lesssim &h^{2}\|\boldsymbol{v}-S_h\boldsymbol{v}\|_X^2+h^{2(k-1)}\|\boldsymbol{v}^d\|_{k-1,d}^2\\
\lesssim &h^{2k}(\|\boldsymbol{v}^s\|_{k,s}^2+\|\nabla\cdot \boldsymbol{v}^d\|_{k-1,d}^2)+h^{2(k-1)}\|\boldsymbol{v}^d\|_{k-1,d}^2.
\end{aligned}
\end{align}
Then, from (\ref{eqn:last two term}), (\ref{eqn:estiate Sh}), (\ref{eqn:estiate Pih}), and (\ref{eqn:estiate Sh and Phi}), we get the estimate of the last two terms in (\ref{eqn:error estmate pressure robust})
\begin{align}\label{eqn:last two term estimate}
\|(1-\Pi_hS_h)\boldsymbol{v}\circ\Pi_h\|_{(V_h(0))^*}^2
\lesssim  h^{2(k-1)}(\|\boldsymbol{v}^s\|_{k,s}^2+\|\boldsymbol{v}^d\|_{k-1,d}^2+\|\nabla\cdot \boldsymbol{v}^d\|_{k-1,d}^2).
\end{align}

Finally, from (\ref{eqn:approximation error u and z}), (\ref{eqn:error estmate pressure robust}), (\ref{eqn:last two term estimate}), (\ref{eqn:third and fourth term estimate}), and (\ref{eqn:error estmate pressure robust}), the proof is completed.
%from interpolation property of divergence-free reconstruction operator in (\ref{eqn:Pi inequality}), the approximation property (\ref{eqn:error u and Sh u}), interpolation property of  shown in (4.3), (4.13), and (4.35) in \cite{Layton2002}, (\ref{eqn:error estmate pressure robust}), and Lemma~\ref{lem:three consistency errors}, we get (\ref{eqn:error estiamte regular for pressure robust}). Thus, we complete the proof.
\end{proof}
\begin{remark}
Employing a divergence-free reconstruction operator, the conditions of exact divergence-freedom and interface normal continuity are precisely satisfied, as elucidated in Lemma~\ref{lem:Pi}. Additionally, within the realm of error estimation, terms correlated with pressure are observed to vanish, a phenomenon delineated by the equation (\ref{eqn:pressure related terms}).
\end{remark}

\section*{Acknowledgments}
%The work of J. Zhang was supported by the National Natural Science Foundation of China (grant no.12301469), 
%the Natural Science Foundation of Jiangsu Province (grant no.BK20210540), 
%and the Natural Science Foundation of The Jiangsu Higher Education Institutions of China (No. 21KJB110015). 
%The work of R. Zhang was supported by the National Key Research and Development Program of China (grant no.2020YFA0713601).
The work of J. Li is supported by the National Natural Science Foundation of China (grant No. 12201310) and the Natural Science Foundation of Jiangsu Province (grant No. BK20190766). The work of J. Zhang is supported by the National Natural Science Foundation of China (grant No. 12301469) and the Natural Science Foundation of Jiangsu Province (grant No. BK20210540). 
The work of R. Zhang was supported by the National Natural Science Foundation of China (grant No. 22341302), the National Key Research and Development Program of China (grant No. 2020YFA0713602, 2023YFA1008803), and the Key Laboratory of Symbolic Computation and Knowledge Engineering of Ministry of Education of China housed at Jilin University.

%\section*{Declarations}
%\textbf{Conflict of interest} The authors have no conflicts of interest to declare.\\
%~\\
%\textbf{Data Availability} Data sharing is not applicable to this article as no datasets were generated or analysed during the current study.

\bibliographystyle{plain}
%\bibliography{bib/tex}

\end{document}